\newtheorem{lemma}{Lemma}[section]
\newtheorem{prop}[lemma]{Proposition}
\newtheorem{thm}[lemma]{Theorem}
\newtheorem{cor}[lemma]{Corollary}
\theoremstyle{definition}
\theoremstyle{remark}
\newtheorem{remark}[lemma]{Remark}
\numberwithin{equation}{section} \numberwithin{table}{section}
\begin{document}

\title[On small bases which admit countably many expansions]{On small bases which admit countably many expansions}
\dedicatory{Dedicated to P. Erd\H{o}s on the 100th anniversary of his birth}
\author{Simon Baker}
\address{
School of Mathematics, The University of Manchester,
Oxford Road, Manchester M13 9PL, United Kingdom. E-mail:
simon.baker@manchester.ac.uk}

\date{\today}
\subjclass[2010]{11A63, 37A45}
\keywords{Beta-expansion, non-integer base} 

\begin{abstract}
Let $q\in(1,2)$ and $x\in[0,\frac1{q-1}]$. We say that a sequence $(\epsilon_i)_{i=1}^{\infty}\in\{0,1\}^{\mathbb{N}}$ is an expansion of $x$ in base~$q$ (or a $q$-expansion) if
\[
x=\sum_{i=1}^{\infty}\epsilon_iq^{-i}.
\]
Let $\mathcal{B}_{\aleph_{0}}$ denote the set of $q$ for which there exists $x$ with exactly $\aleph_{0}$ expansions in base $q$. In \cite{EHJ} it was shown that $\min\mathcal{B}_{\aleph_{0}}=\frac{1+\sqrt{5}}{2}.$ In this paper we show that the smallest element of $\mathcal{B}_{\aleph_{0}}$ strictly greater than $\frac{1+\sqrt{5}}{2}$ is $q_{\aleph_{0}}\approx1.64541$, the appropriate root of $x^6=x^4+x^3+2x^2+x+1$. This leads to a full dichotomy for the number of possible $q$-expansions for $q\in (\frac{1+\sqrt{5}}{2},q_{\aleph_{0}})$. We also prove some general results regarding $\mathcal{B}_{\aleph_{0}}\cap[\frac{1+\sqrt{5}}{2},q_{f}],$ where $q_{f}\approx 1.75488$ is the appropriate root of $x^{3}=2x^{2}-x+1.$ Moreover, the techniques developed in this paper imply that if $x\in [0,\frac{1}{q-1}]$ has uncountably many $q$-expansions then the set of $q$-expansions for $x$ has cardinality equal to that of the continuum, this proves that the continuum hypothesis holds when restricted to this specific case.
\end{abstract}

\maketitle

\section{Introduction}

Let ${q}\in(1,2)$ and $I_{q}=[0,\frac{1}{{q}-1}]$. Each $x\in I_{q}$ has an expansion of the form
\begin{equation}
\label{beta equation}
x=\sum_{i=1}^{\infty}\frac{\epsilon_{i}}{{q}^{i}},
\end{equation} for some $(\epsilon_{i})_{i=1}^{\infty}\in\{0,1\}^{\mathbb{N}}$. We call such a sequence a \textit{$q$-expansion} of $x,$ when $(\ref{beta equation})$ holds we will adopt the notation $x=(\epsilon_{1},\epsilon_{2},\ldots)_{q}$. Expansions in non-integer bases were pioneered in the papers of R\'enyi \cite{Renyi} and Parry \cite{Parry}.

Given $x\in I_{q}$ we denote the set of ${q}$-expansions of $x$ by $\Sigma_{q}(x)$, i.e., $$\Sigma_{q}(x)=\Big\{(\epsilon_{i})_{i=1}^{\infty}\in \{0,1\}^{\mathbb{N}} : \sum_{i=1}^{\infty}\frac{\epsilon_{i}}{{q}^{i}}=x\Big\}.$$ The endpoints of $I_{q}$ always have a unique $q$-expansion, typically an element of $(0,\frac{1}{q-1})$ will have a nonunique $q$-expansion. In \cite{Erdos} it was shown that for $q\in(1,\frac{1+\sqrt{5}}{2})$ the set $\Sigma_{q}(x)$ is uncountable for all $x\in (0,\frac{1}{{q}-1})$. When $q=\frac{1+\sqrt{5}}{2}$ it was shown in \cite{SidVer} that every $x\in(0,\frac{1}{q-1})$ has uncountably many $q$-expansions unless $x=\frac{(1+\sqrt{5})n}{2}\bmod1$, for some $n\in\mathbb{Z}$, in which case $\Sigma_{q}(x)$ is infinite countable. In \cite{Sid} it was shown that for $q\in(\frac{1+\sqrt{5}}{2},2)$ the set $\Sigma_{q}(x)$ is uncountable for almost every $x\in (0,\frac{1}{q-1})$.  Furthermore, if $q\in(\frac{1+\sqrt{5}}{2},2)$ then it was shown in \cite{DaKa} that there always exists $x\in (0,\frac{1}{{q}-1})$ with a unique $q$-expansion. 

In this paper we will be interested in the set of $q\in(1,2)$ for which there exists $x\in (0,\frac{1}{q-1})$ satisfying card $\Sigma_{q}(x)=\aleph_{0}$. More specifically, we will be interested in the set $$\mathcal{B}_{\aleph_{0}}:=\Big\{{q}\in(1,2)\Big| \textrm{ there exists } x\in \Big(0,\frac{1}{{q}-1}\Big) \textrm{ satisfying} \text{ card }\Sigma_{q}(x)=\aleph_{0}\Big\}.$$ In \cite{EHJ} it was shown $\min\mathcal{B}_{\aleph_{0}}=\frac{1+\sqrt{5}}{2}.$ We can define $\mathcal{B}_{k}$ in an analogous way for all $k\geq 1.$ It was first shown in \cite{EJ} that $\mathcal{B}_{k}\neq \emptyset$ for all $k\geq 2,$ this was later improved upon in \cite{Sid1} where it was shown that for each $k\in\mathbb{N}$ there exists $\gamma_{k}>0$ such that $(2-\gamma_{k},2)\subset \mathcal{B}_{j}$ for all $1\leq j\leq k$. Combining the results stated in \cite{Sid1} and \cite{BakerSid} the following theorem is shown to hold.

\begin{thm}
\label{Finite case thm}
\begin{itemize}
	\item The smallest element of $\mathcal{B}_{2}$ is
$$
{q}_{2}\approx 1.71064,
$$
the appropriate root of $x^{4}=2x^{2}+x+1$.
	\item For $k\geq 3$ the smallest element of $\mathcal{B}_{k}$ is
$$
{q}_{f}\approx 1.75488,
$$
the appropriate root of $x^{3}=2x^{2}-x+1$. 
\item Moreover, the first element of $\mathcal{B}_{2}$ strictly greater than $q_{2}$ is $q_{f}$. 
\end{itemize}
\end{thm}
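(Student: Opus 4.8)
The plan is to translate the condition ``card $\Sigma_q(x)=k$'' into a statement about a binary expansion tree and then, for each prescribed $k$, to locate the least base at which such a tree can occur.

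\emph{Setup.} For $y\in I_q$ put $T_0y=qy$ and $T_1y=qy-1$; the digit $\epsilon$ is admissible at $y$ iff $T_\epsilon y\in I_q$, so $0$ is admissible iff $y\le\frac1{q(q-1)}$ and $1$ iff $y\ge\frac1q$. A genuine choice of digit therefore occurs exactly when the current orbit value lies in the switch region $S_q:=[\frac1q,\frac1{q(q-1)}]$, and $\Sigma_q(x)$ is in natural bijection with the infinite paths of the tree built from the orbit of $x$, which branches precisely at the visits to $S_q$. Hence card $\Sigma_q(x)=k<\aleph_0$ forces the tree to branch only finitely many times and forces each of its $k$ leaves to begin a tail that never re-enters $S_q$; such tails are exactly the unique (univoque) expansions. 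The first task is thus to control the univoque set $\mathcal U_q$.

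\emph{The univoque set and the lower bounds.} I would use the lexicographic description of $\mathcal U_q$ via the quasi-greedy expansion $\alpha(q)=(a_i)$ of $1$: a sequence is univoque iff each tail following a digit $0$ is lexicographically smaller than $\alpha(q)$ and each reflected tail following a digit $1$ is smaller than $\alpha(q)$. In the relevant range $q<q_{KL}$ this set is countable with an explicit layered structure. The lower bounds $\min\mathcal B_2=q_2$ and $\min\mathcal B_k=q_f$ for $k\ge3$ then reduce to showing that below the respective threshold the branchings cannot be arranged to leave exactly the prescribed number of univoque leaves: for $q<q_2$ no point of $S_q$ has both children univoque (killing $k=2$, since $q\in\mathcal B_2$ is equivalent to the existence of some $y\in S_q$ with $T_0y,T_1y\in\mathcal U_q$), and for $q<q_f$ one cannot cascade branchings so as to leave exactly $k\ge3$ univoque tails. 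Each such impossibility is a lexicographic comparison of the children's tails against $\alpha(q)$.

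\emph{Constructions and the gap.} For the matching upper bounds I would construct at $q=q_2$ a point $y\in S_q$ whose two children are both univoque --- here the minimal polynomial $x^4=2x^2+x+1$ enters as the exact condition making the two required eventually periodic univoque sequences have equal value --- producing an $x$ with card $\Sigma_q(x)=2$; and at $q=q_f$ a ``renewal'' configuration that branches, descends through forced digits back to an equivalent branch point finitely many times, and then terminates in univoque tails, yielding for every $k\ge3$ a point with exactly $k$ expansions, the relation $x^3=2x^2-x+1$ being the base at which this re-entry mechanism first becomes available. For the third bullet I would verify that the $q_2$-construction is rigid: it rests on an exact algebraic coincidence and is destroyed for $q\in(q_2,q_f)$, so that on this open interval a single branching can never be completed into two univoque tails, and then supply an explicit two-expansion example at $q_f$.

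\emph{The main obstacle.} The crux is the lower-bound combinatorics: I must rule out \emph{exactly} $k$ branches for every base below each threshold, which is not a single inequality but an exhaustive analysis of how finitely many branchings may be arranged together with a determination of which tails can be univoque, all mediated by lexicographic comparisons with $\alpha(q)$ whose value-equalities vary delicately with $q$. Identifying the sharp algebraic values $q_2$ and $q_f$ rather than approximate bounds, and in particular explaining the isolation of $q_2$ on the left of $\mathcal B_2$, is the heart of the matter and is precisely what the tree analyses of \cite{Sid1} and \cite{BakerSid} supply.
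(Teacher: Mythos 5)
First, a point of comparison: the paper does not prove Theorem \ref{Finite case thm} at all --- it is imported wholesale from \cite{Sid1} and \cite{BakerSid} ("Combining the results stated in \cite{Sid1} and \cite{BakerSid} the following theorem is shown to hold"). So your proposal is being measured against the proofs in those references rather than against anything in this paper. Your framework is the correct one and matches theirs: branching occurs exactly at visits to $S_q=[\frac1q,\frac1{q(q-1)}]$, a point with exactly $k<\aleph_0$ expansions has a finite branching tree whose terminal branches lie in $U_q$, and in particular $q\in\mathcal B_2$ if and only if some $y\in S_q$ has $T_{q,0}y$ and $T_{q,1}y$ both in $U_q$. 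That reduction is sound.

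The genuine gap is that the quantitative core of the argument --- the part that actually singles out the algebraic numbers $q_2$ and $q_f$ --- is described but never executed, and you concede as much in your final paragraph. Concretely: (a) for the $\mathcal B_2$ bullets, once one has the explicit description of $U_q$ for $q\le q_f$ (the paper's Lemma \ref{Unique expansions lemma}: $U_q=\{(0^k(10)^\infty)_q,(1^k(10)^\infty)_q,0,\frac1{q-1}\}$), the condition ``some $y\in S_q$ has both children in $U_q$'' becomes a countable but explicitly parametrised family of polynomial identities $(1\epsilon)_q=(0\delta)_q$ with $\epsilon,\delta$ ranging over the listed tails, together with the membership $y\in S_q$; one must solve this family and verify that its smallest solution in $(\frac{1+\sqrt5}{2},2)$ is the root of $x^4=2x^2+x+1$, and that no further solution lies in $(q_2,q_f)$. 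None of these comparisons is carried out, and without them neither the value $q_2$ nor the isolation claim in the third bullet is established. (b) For $k\ge3$ the assertion ``one cannot cascade branchings so as to leave exactly $k$ univoque tails'' is precisely the hard step: one needs a trapping mechanism (the analogue of the interval $J_q$ and Lemma \ref{inner switch lemma} in this paper) showing that every non-terminal branch returns to a controlled region where the finitely many possible configurations can be enumerated, plus the verification that the first base admitting the renewal configuration is the root of $x^3=2x^2-x+1$. As written, the proposal is a correct plan whose decisive steps are delegated back to the very papers the theorem is quoted from; it does not constitute an independent proof.
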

In this paper we will show that the following theorem holds.

\begin{thm}
\label{Main thm}
The smallest element of $\mathcal{B}_{\aleph_{0}}$ strictly greater than $\frac{1+\sqrt{5}}{2}$ is $$q_{\aleph_{0}}\approx 1.64541,$$ the appropriate root of $x^6=x^4+x^3+2x^2+x+1.$
\end{thm}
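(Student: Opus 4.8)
The plan is to pass from the analytic statement about $\Sigma_q(x)$ to a purely combinatorial analysis of the branching structure of expansions, and then to locate $q_{\aleph_0}$ as the first base above $\tfrac{1+\sqrt5}{2}$ at which a suitable \emph{comb} can be assembled. First I would set up the standard dynamical dictionary: on $I_q$ consider the branch maps $T_0(y)=qy$ and $T_1(y)=qy-1$, valid respectively on $[0,\tfrac1{q(q-1)}]$ and $[\tfrac1q,\tfrac1{q-1}]$, so that reading off the digits of an expansion amounts to following a path in the tree of admissible orbits, and a genuine choice (a branching) is available exactly when the current orbit point lies in the switch region $S_q=[\tfrac1q,\tfrac1{q(q-1)}]$. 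Under this dictionary $\Sigma_q(x)$ is identified with the set of infinite paths of a closed subtree of $\{0,1\}^{\mathbb N}$, hence is a compact subset of $\{0,1\}^{\mathbb N}$. I would immediately record the consequence underlying the continuum-hypothesis remark of the abstract: a compact (indeed any Borel) subset of $\{0,1\}^{\mathbb N}$ has the perfect set property, so $\Sigma_q(x)$ is either countable or of cardinality $\mathfrak c$; in particular $\mathrm{card}\,\Sigma_q(x)=\aleph_0$ holds precisely when $\Sigma_q(x)$ is infinite and contains no perfect subset.

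The second step converts the condition ``countably infinite'' into a recurrence condition on the orbit. A compact subtree of $\{0,1\}^{\mathbb N}$ is countable exactly when it is scattered, and by the Cantor--Bendixson analysis this fails precisely when the tree contains a subtree each of whose nodes has two descendants that branch again; such a subtree carries a Cantor set and forces cardinality $\mathfrak c$. Thus a point with exactly $\aleph_0$ expansions must both branch infinitely often along some path and avoid every such doubly branching subtree. These two requirements together force a \emph{comb}: along one distinguished path the orbit revisits $S_q$ infinitely often, while at each visit the complementary digit launches an orbit that never re-enters the interior of $S_q$ --- an escape into the univoque set $\mathcal U_q$. Via the de Vries--Komornik lexicographic description of $\mathcal U_q$ in terms of the quasi-greedy expansion $\alpha(q)$ of $1$, each escape is encoded by admissibility inequalities of the form $\sigma^n(\epsilon)\prec\alpha(q)$, with the usual reflection applied after a digit $1$. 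For the extremal base the comb may be taken eventually periodic, so the whole configuration reduces to a finite loop word $w$ together with the finitely many escape words hanging off it.

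The third step is the construction realising $q_{\aleph_0}$. Here I would exhibit the explicit comb: a branch point $y\in S_q$ (equivalently a short cycle of branch points) whose ``continue'' path is the periodic sequence determined by a length-six loop word $w$, so that the closing relation $T_w(y)=y$ reads $(q^6-1)y=\sum_{i=1}^6 w_i q^{6-i}$; eliminating $y$ against the requirement that the escape hanging off the loop sit exactly on the boundary of $\mathcal U_q$ then pins down the algebraic relation $q^6=q^4+q^3+2q^2+q+1$. At $q=q_{\aleph_0}$ one checks directly that $y\in S_q$, that every escape word is a genuine unique expansion --- i.e. its reflected admissibility inequalities relative to $\alpha(q_{\aleph_0})$ all hold --- and that the comb contributes exactly one expansion for each finite number of loop traversals together with the single infinite-loop path. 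This yields $\mathrm{card}\,\Sigma_{q_{\aleph_0}}(y)=\aleph_0$ and hence $q_{\aleph_0}\in\mathcal B_{\aleph_0}$.

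Finally, and this is where I expect the real difficulty, I would prove the gap: no $q\in(\tfrac{1+\sqrt5}{2},q_{\aleph_0})$ admits any comb, so that $\mathcal B_{\aleph_0}\cap(\tfrac{1+\sqrt5}{2},q_{\aleph_0})=\emptyset$. The mechanism is a tension between two demands: the loop must \emph{close up} (a branch point must recur), yet at every visited branch point one child must escape permanently into $\mathcal U_q$; were both children able to reach $S_q$ again, the recurrence would manufacture a doubly branching subtree and force cardinality $\mathfrak c$. For $q$ just above $\tfrac{1+\sqrt5}{2}$ the switch region and the univoque set are so constrained that these two demands are incompatible, and I would make this quantitative through the lexicographic admissibility conditions, in the spirit of the proof of Theorem \ref{Finite case thm}: one classifies the finitely many candidate loop words and escape words compatible with $\alpha(q)$ for $q<q_{\aleph_0}$ and shows that each either fails to close up, forces a second returning child (hence $\mathfrak c$ many expansions), or can only be realised once $q\ge q_{\aleph_0}$, the boundary case being exactly the construction above. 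The main obstacle is establishing that this classification is genuinely exhaustive --- that there is no exotic, non-eventually-periodic way of arranging infinitely many single-escape branchings below $q_{\aleph_0}$ --- which is precisely where the delicate interval arithmetic relating $S_q$, $\mathcal U_q$ and $\alpha(q)$ must be carried out in full.
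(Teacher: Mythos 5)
Your overall architecture does track the paper's: the dynamical dictionary ($T_{q,0},T_{q,1}$ and the switch region $S_q$), the countable-versus-continuum dichotomy (the paper proves this as Theorem \ref{continuum theorem} by an explicit tree argument rather than by invoking the perfect set property, but the content is the same), the reduction to a ``comb'' (the paper's $q$ null infinite point, Proposition \ref{Branching prop}), an explicit witness at $q_{\aleph_0}$, and a case analysis below $q_{\aleph_0}$. Two of your structural claims are overstated, though repairable: (i) a point with exactly $\aleph_0$ expansions need not itself be a comb --- only some descendant of some branching point must be, which is what Proposition \ref{Branching prop} actually delivers; and (ii) the assertion that every escaping child lands in $U_q$ does not follow from countability alone (a child could re-enter $S_q$ finitely often and carry finitely many but more than one expansion); it is Theorem \ref{Finite case thm} that forces the escape into $U_q$ for $q\in(\tfrac{1+\sqrt{5}}{2},q_f)\setminus\{q_2\}$. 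Your guess at the witness is also off: the paper's comb is not a length-six loop but the period-four point $\frac{q+q^2}{q^4-1}=((0110)^{\infty})_q$, and the degree-six polynomial arises from the escape condition $T_{q,0}\bigl(\frac{1+q^3}{q^4-1}\bigr)=(111(10)^{\infty})_q$, not from closing a six-cycle.

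The genuine gap is exactly the step you yourself flag as ``the main obstacle'': showing that the classification of possible combs below $q_{\aleph_0}$ is exhaustive, with no exotic non-eventually-periodic configuration. Your proposal contains no mechanism for this, and lexicographic admissibility alone gives no reason for the case analysis to terminate. The paper's missing device is the interval $J_q=[\frac{q+q^2}{q^4-1},\frac{1+q^3}{q^4-1}]\subseteq S_q$: Lemma \ref{inner switch lemma} shows that every point with a non-unique expansion is carried into $J_q$ by some finite composition of the maps (because $T_{q,1}\circ T_{q,0}$ and $T_{q,0}\circ T_{q,1}$ expand distances from $\frac{1}{q^2-1}$ and $\frac{q}{q^2-1}$ by the factor $q^2$), and Proposition \ref{Branching points prop} shows that for $q<q_{\aleph_0}$ exactly four explicit points of $J_q$ are preimages of $U_q$. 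This localizes every branching point of a putative null infinite point to four concrete candidates, after which Lemma \ref{sequence lemma} pins down the connecting words and Proposition \ref{First half prop} rules out each candidate by direct interval arithmetic. Without something playing the role of $J_q$ --- a region into which all branching points eventually return and in which the preimages of $U_q$ form a finite explicit list --- the lower-bound half of the theorem, which is the substance of the result, remains unproved in your outline.
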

This answers a question originally posed in \cite{Sid1}. The following corollary is an immediate consequence of Theorem \ref{Finite case thm}, Theorem \ref{Main thm} and our earlier remarks, it implies a full dichotomy for the number of possible $q$-expansions for $q\in (\frac{1+\sqrt{5}}{2},q_{\aleph_{0}}).$

\begin{cor}
Let $q\in (\frac{1+\sqrt{5}}{2},q_{\aleph_{0}}),$  then there exists $x\in (0,\frac{1}{q-1})$ such that $\Sigma_{q}(x)$ is uncountable and there exists $x\in(0,\frac{1}{q-1})$ with a unique $q$-expansion, moreover, for any $x\in(0,\frac{1}{q-1})$ the set $\Sigma_{q}(x)$ is either uncountable or a singleton set.
\end{cor}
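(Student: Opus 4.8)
The plan is to deduce all three assertions from the cited results together with Theorems \ref{Finite case thm} and \ref{Main thm}, after first recording the numerical ordering $\frac{1+\sqrt{5}}{2} < q_{\aleph_{0}} \approx 1.64541 < q_{2}\approx 1.71064 < q_{f}\approx 1.75488$, so that any $q\in(\frac{1+\sqrt{5}}{2},q_{\aleph_{0}})$ sits strictly below every one of these larger thresholds. The two existence statements are then immediate. Since $q>\frac{1+\sqrt{5}}{2}$, the result of \cite{DaKa} produces a point with a unique $q$-expansion, which gives the second assertion. Since $q\in(\frac{1+\sqrt{5}}{2},2)$, the result of \cite{Sid} shows that $\Sigma_{q}(x)$ is uncountable for Lebesgue-almost every $x\in(0,\frac1{q-1})$; as $(0,\frac1{q-1})$ has positive measure this set of $x$ is in particular non-empty, which gives the first assertion.

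For the dichotomy I would argue by elimination. Fix $x\in(0,\frac1{q-1})$. Then $\Sigma_{q}(x)$ is a non-empty subset of $\{0,1\}^{\mathbb{N}}$, so card $\Sigma_{q}(x)$ is finite, equal to $\aleph_{0}$, or uncountable; it therefore suffices to exclude the cases where card $\Sigma_{q}(x)$ is finite and at least $2$, and the case where it equals $\aleph_{0}$. If card $\Sigma_{q}(x)=k$ for some finite $k\geq 2$, then $q\in\mathcal{B}_{k}$; but Theorem \ref{Finite case thm} gives $\min\mathcal{B}_{2}=q_{2}$ and $\min\mathcal{B}_{k}=q_{f}$ for $k\geq 3$, hence $\min\mathcal{B}_{k}\geq q_{2}>q_{\aleph_{0}}>q$ for every $k\geq 2$, a contradiction. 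If card $\Sigma_{q}(x)=\aleph_{0}$, then $q\in\mathcal{B}_{\aleph_{0}}\cap(\frac{1+\sqrt{5}}{2},q_{\aleph_{0}})$; but Theorem \ref{Main thm} asserts that $q_{\aleph_{0}}$ is the smallest element of $\mathcal{B}_{\aleph_{0}}$ strictly greater than $\frac{1+\sqrt{5}}{2}$, so this intersection is empty, again a contradiction. Hence card $\Sigma_{q}(x)$ is either $1$ or uncountable, which is the asserted dichotomy.

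There is no genuine obstacle here, as the whole substance of the statement is carried by Theorems \ref{Finite case thm} and \ref{Main thm}; the only points requiring care are bookkeeping ones. First, one must verify the numerical ordering of the four constants so that every relevant threshold strictly exceeds $q$. Second, one should confirm that the trichotomy finite / $\aleph_{0}$ / uncountable genuinely exhausts the possibilities for card $\Sigma_{q}(x)$, so that ruling out the two intermediate regimes really does force a singleton or an uncountable set. Finally, it is worth noting explicitly that the interval $(\frac{1+\sqrt{5}}{2},q_{\aleph_{0}})$ is open at both ends: since $\frac{1+\sqrt{5}}{2}$ and $q_{\aleph_{0}}$ both lie in $\mathcal{B}_{\aleph_{0}}$, it is precisely the exclusion of these endpoints that permits Theorem \ref{Main thm} to eliminate the countable case.
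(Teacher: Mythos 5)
Your proposal is correct and follows exactly the route the paper intends: the paper states the corollary as an immediate consequence of Theorem \ref{Finite case thm}, Theorem \ref{Main thm} and the earlier remarks citing \cite{DaKa} and \cite{Sid}, which is precisely the elimination argument you spell out. The only difference is that you write out the bookkeeping explicitly, which the paper omits.
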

Before stating the theory behind Theorem \ref{Main thm} we shall outline our method of proof, this will help to motivate the following sections. If $q\in\mathcal{B}_{\aleph_{0}},$ then as we will see, there must exist $x\in I_{q}$ for which $\Sigma_{q}(x)$ takes a highly nontrivial structure, in the following sections we refer to these $x$ as $q$ null infinite points. If $I_{q}$ contains a $q$ null infinite point and $q\in [\frac{1+\sqrt{5}}{2},q_{f})\setminus\{q_{2}\},$ then $q$ must satisfy certain strong algebraic properties. Once these properties are appropriately formalised it is apparent that they cannot be satisfied for $q'$ sufficiently close to $q,$ this implies that there exists $\delta>0$ such that $\mathcal{B}_{\aleph_{0}}\cap (\frac{1+\sqrt{5}}{2},\frac{1+\sqrt{5}}{2}+\delta)=\emptyset,$ and more generally that $\mathcal{B}_{\aleph_{0}}\cap([\frac{1+\sqrt{5}}{2},q_{f})\setminus\{q_{2}\})$ is a discrete set. The $\delta$ produced by our method in fact turns out to be optimal. We remark that $q_{\aleph_{0}}\in \mathcal{B}_{\aleph_{0}}$ was already known to Hare and Sidorov \cite{HareSid}, moreover, numerical experiments done by Hare seemed to suggest $q_{\aleph_{0}}$ was the smallest element of $\mathcal{B}_{\aleph_{0}}$ strictly greater than $\frac{1+\sqrt{5}}{2}.$

In Section \ref{Section 2} we establish several technical results that will be used in Section \ref{proof section} where we prove Theorem \ref{Main thm}. In Section \ref{Final discussion} we prove several results that arose naturally from our proof of Theorem \ref{Main thm}. In particular, we prove that for all $q\in (1,2),$ if $x\in I_{q}$ satisfies $\Sigma_{q}(x)$ is uncountable, then it must have cardinality equal to that of the continuum, as such the continuum hypothesis holds for this specific case, this answers a question attributed to Erd\H{o}s. We also show that $\mathcal{B}_{\aleph_{0}}\cap([\frac{1+\sqrt{5}}{2},q_{f})\setminus\{q_{2}\})$ is a discrete set, and propose a method by which we can determine whether a typical $q\in[\frac{1+\sqrt{5}}{2},q_{f})\setminus\{q_{2}\}$ is an element of $\mathcal{B}_{\aleph_{0}}.$ Finally in Section \ref{Open problems} we pose some open questions.

\section{Preliminaries}
\label{Section 2}
We begin by recalling some standard results. In what follows we fix $T_{{q},0}(x)={q} x$ and $T_{{q},1}(x)={q} x -1$, we typically denote an element of $\bigcup_{n=0}^{\infty}\{T_{{q},0},T_{{q},1}\}^{n}$ by $a;$ here $\{T_{{q},0},T_{{q},1}\}^{0}$ denotes the set consisting of the identity map. Moreover, if $a= (a_{1},\ldots ,a_{n})$ we shall use $a(x)$ to denote $(a_{n}\circ \cdots  \circ a_{1})(x)$ and $|a|$ to denote the length of $a$. Given $a\in \bigcup_{n=0}^{\infty}\{T_{{q},0},T_{{q},1}\}^{n}$ and $q'\neq q,$ we can identify $a$ with an element of $\bigcup_{n=0}^{\infty}\{T_{{q'},0},T_{{q'},1}\}^{n}$ by replacing each $T_{q,i}$ term in $a$ with a $T_{q',i}$ term. By an abuse of notation we also denote the element of $\bigcup_{n=0}^{\infty}\{T_{{q'},0},T_{{q'},1}\}^{n}$ attained through this identification by $a$, whether we are interpreting $a$ as an element of $\bigcup_{n=0}^{\infty}\{T_{{q},0},T_{{q},1}\}^{n}$ or $\bigcup_{n=0}^{\infty}\{T_{{q'},0},T_{{q'},1}\}^{n}$ will be clear from the context. We will make regular use of this identification in Section \ref{Final discussion}.

We let $$\Omega_{q}(x)=\Big\{(a_{i})_{i=1}^{\infty}\in \{T_{{q},0},T_{{q},1}\}^{\mathbb{N}}:(a_{n}\circ \cdots \circ a_{1})(x)\in I_{q}
 \textrm{ for all } n\in\mathbb{N}\Big\}.$$
The significance of $\Omega_{q}(x)$ is made clear by the following lemma.

\begin{lemma}
\label{Bijection lemma}
$\text{card }\Sigma_{q}(x)=\text{card }\Omega_{q}(x)$ where our bijection identifies $(\epsilon_{i})_{i=1}^{\infty}$ with $(T_{{q},\epsilon_{i}})_{i=1}^{\infty}$.
\end{lemma}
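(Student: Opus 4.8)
The plan is to observe that the assignment $(\epsilon_{i})_{i=1}^{\infty}\mapsto (T_{q,\epsilon_{i}})_{i=1}^{\infty}$ is already a bijection between the full sequence spaces $\{0,1\}^{\mathbb{N}}$ and $\{T_{q,0},T_{q,1}\}^{\mathbb{N}}$, since $T_{q,0}$ and $T_{q,1}$ are distinct maps and the correspondence $0\leftrightarrow T_{q,0}$, $1\leftrightarrow T_{q,1}$ is applied coordinatewise. Consequently it suffices to prove that this map carries $\Sigma_{q}(x)$ exactly onto $\Omega_{q}(x)$; that is, for a fixed $(\epsilon_{i})_{i=1}^{\infty}$ we must show
\[
\sum_{i=1}^{\infty}\frac{\epsilon_{i}}{q^{i}}=x\quad\Longleftrightarrow\quad (T_{q,\epsilon_{n}}\circ\cdots\circ T_{q,\epsilon_{1}})(x)\in I_{q}\text{ for all }n.
\]

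First I would establish, by a one-line induction on $n$, the closed form
\[
(T_{q,\epsilon_{n}}\circ\cdots\circ T_{q,\epsilon_{1}})(x)=q^{n}\Big(x-\sum_{i=1}^{n}\frac{\epsilon_{i}}{q^{i}}\Big),
\]
using that each $T_{q,\epsilon}$ acts by $y\mapsto qy-\epsilon$. Writing $x_{n}$ for this quantity, the whole lemma reduces to analysing when $x_{n}\in[0,\tfrac{1}{q-1}]$ for every $n$, and the key elementary ingredient is the geometric-series evaluation $\sum_{i=n+1}^{\infty}q^{-i}=\frac{1}{q^{n}(q-1)}$.

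For the forward implication, if $x=\sum_{i=1}^{\infty}\epsilon_{i}q^{-i}$ then $x_{n}=\sum_{j=1}^{\infty}\epsilon_{n+j}q^{-j}$ is a tail of the expansion, and since every $\epsilon_{n+j}\in\{0,1\}$ this lies between $0$ and $\sum_{j=1}^{\infty}q^{-j}=\tfrac{1}{q-1}$, so $x_{n}\in I_{q}$ for all $n$. For the reverse implication, the constraint $0\le x_{n}\le\tfrac{1}{q-1}$ translates, after dividing by $q^{n}$ and using the identity above, into $0\le x-\sum_{i=1}^{n}\epsilon_{i}q^{-i}\le\frac{1}{q^{n}(q-1)}$; letting $n\to\infty$ and noting $q>1$ forces the partial sums to converge to $x$, which is precisely the statement that $(\epsilon_{i})_{i=1}^{\infty}$ is a $q$-expansion of $x$.

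I do not expect a genuine obstacle here: the content is entirely contained in the explicit iteration formula together with the squeeze in the reverse direction. The only points requiring care are getting the closed form for the composition right (the iterates of $T_{q,\epsilon}$ rescale by $q^{n}$, so the admissibility condition must be read at scale $q^{-n}$) and confirming that the two endpoints of $I_{q}$ are matched correctly by the two sides of the geometric-series estimate.
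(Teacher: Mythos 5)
Your argument is correct and complete: the closed form $(T_{q,\epsilon_{n}}\circ\cdots\circ T_{q,\epsilon_{1}})(x)=q^{n}(x-\sum_{i=1}^{n}\epsilon_{i}q^{-i})$ together with the tail estimate $\sum_{i=n+1}^{\infty}q^{-i}=\frac{1}{q^{n}(q-1)}$ gives exactly the equivalence needed, and the coordinatewise identification is trivially injective. The paper itself does not prove this lemma but defers to the reference \cite{Baker}; your proof is the standard argument one would find there, so there is nothing to add.
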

The proof of Lemma \ref{Bijection lemma} is contained within \cite{Baker}. It is an immediate consequence of Lemma \ref{Bijection lemma} that we can interpret Theorem \ref{Main thm} in terms of $\Omega_{q}(x)$ rather than $\Sigma_{q}(x)$. Throughout the course of our proof of Theorem \ref{Main thm} we will frequently switch between $\Sigma_{q}(x)$ and the dynamical interpretation of $\Sigma_{q}(x)$ provided by Lemma \ref{Bijection lemma}, often considering $\Omega_{q}(x)$ will help our exposition.

An element $x\in I_{q}$ satisfies $T_{{q},0}(x)\in I_{q}$ and $T_{{q},1}(x)\in I_{q}$ if and only if $x\in[\frac{1}{q},\frac{1}{{q}({q}-1)}]$. Furthermore, if $\text{card }\Sigma_{q}(x)>1$ or equivalently $\text{card }\Omega_{q}(x)>1$, then there exists a unique minimal sequence of transformations $a$ such that $a(x)\in[\frac{1}{q},\frac{1}{{q}({q}-1)}]$. Throughout this paper when we speak of a finite sequence being minimal we mean minimal amongst $\bigcup_{n=0}^{\infty}\{T_{{q},0},T_{{q},1}\}^{n}$ with respect to length. In what follows we let $S_{q}:=[\frac{1}{q},\frac{1}{{q}({q}-1)}];$ $S_q$ is usually referred to as the \textit{switch region}. If $x\in(0,\frac{1}{q-1})$ and $a$ is a finite sequence of transformations satisfying $a(x)\in S_{q},$ then we say that the sequence $a$ is a \textit{branching sequence for $x$} and $a(x)$ is a \textit{branching point of $x$.}

In what follows we denote the set of $x\in I_{q}$ with unique $q$-expansion by $U_{q},$ i.e. $$U_{q}=\Big\{x\in I_{q}| \textrm{ card }\Sigma_{q}(x)=1\Big\}.$$ The following lemma is a consequence of \cite[Theorem~2]{GlenSid}.

\begin{lemma}
\label{Unique expansions lemma}
Let ${q}\in(\frac{1+\sqrt{5}}{2},{q}_{f}]$, then $$U_{q}=\Big\{(0^{k}(10)^{\infty})_{q},(1^{k}(10)^{\infty})_{q},0,\frac{1}{q-1}\Big\},$$ where $k\ge0$.
\end{lemma}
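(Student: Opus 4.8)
The plan is to work with the symbolic (lexicographic) description of unique expansions rather than with the points of $U_q$ directly, and then to pin down which sequences survive for our specific range of $q$. Recall that if $(\alpha_i)_{i=1}^{\infty}$ denotes the quasi-greedy expansion of $1$ in base $q$, then a sequence $(\epsilon_i)_{i=1}^{\infty}\in\{0,1\}^{\mathbb{N}}$ is the unique $q$-expansion of the point it represents if and only if
\[
\epsilon_{n+1}\epsilon_{n+2}\cdots \prec \alpha_1\alpha_2\cdots \ \text{ whenever } \epsilon_n=0,\qquad \overline{\epsilon_{n+1}}\,\overline{\epsilon_{n+2}}\cdots \prec \alpha_1\alpha_2\cdots \ \text{ whenever } \epsilon_n=1,
\]
for all $n\ge 1$, where $\prec$ is the strict lexicographic order and $\overline{\epsilon}=1-\epsilon$; this is the symbolic content underlying \cite[Theorem~2]{GlenSid}. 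It therefore suffices to identify exactly those $(\epsilon_i)$ satisfying these two conditions for $q\in(\frac{1+\sqrt5}{2},q_f]$. First I would pin down the admissible $(\alpha_i)$: a direct computation gives that the greedy expansion of $1$ at $q_f$ is $11010^{\infty}$, so its quasi-greedy expansion is $(1100)^{\infty}$; since the quasi-greedy expansion of $1$ is strictly increasing in $q$ and equals $(10)^{\infty}$ at $\frac{1+\sqrt5}{2}$, for every $q$ in the half-open interval we have $(10)^{\infty}\prec(\alpha_i)\preceq(1100)^{\infty}$.

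The inclusion $\supseteq$ is the routine direction. Writing $W:=\{0^k(10)^{\infty},1^k(10)^{\infty}:k\ge0\}$, for $w=0^k(10)^{\infty}$ every tail appearing in the two conditions either begins with the digit $0$ (so the condition is automatic, as $\alpha_1=1$) or equals $(10)^{\infty}$, in which case the condition reduces to $(10)^{\infty}\prec(\alpha_i)$; this holds because $q>\frac{1+\sqrt5}{2}$. The case $w=1^k(10)^{\infty}$ follows from the reflection $w\mapsto\overline{w}$, which interchanges the two conditions and under which $W$ is invariant (for instance $\overline{1^k(10)^{\infty}}=0^{k+1}(10)^{\infty}$). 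Together with the trivial unique expansions $0^{\infty}$ and $1^{\infty}$ of the endpoints, this produces every point listed on the right-hand side.

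For the reverse inclusion, since $(\alpha_i)\preceq(1100)^{\infty}$ the two conditions imply the corresponding conditions with $(1100)^{\infty}$ in place of $(\alpha_i)$, so I may argue with the single sequence $(1100)^{\infty}$. The key combinatorial claim is that no univoque sequence other than $0^{\infty},1^{\infty}$ contains the factor $011$ or the factor $100$. Suppose $\epsilon_p=0$ and $\epsilon_{p+1}=\epsilon_{p+2}=1$. The $0$-condition at $p$ forbids $\epsilon_{p+1}\epsilon_{p+2}\cdots$ from beginning with $111$ or $1101$, which forces $\epsilon_{p+3}=\epsilon_{p+4}=0$; the $1$-condition at $p+2$ then forbids $\overline{\epsilon_{p+3}}\,\overline{\epsilon_{p+4}}\cdots=11\cdots$ from beginning with $111$ or $1101$, forcing $\epsilon_{p+5}=\epsilon_{p+6}=1$ and so reproducing the factor $011$ four places further along. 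Iterating yields $\epsilon_{p+1}\epsilon_{p+2}\cdots=(1100)^{\infty}$, which violates the $0$-condition at $p$ since $(1100)^{\infty}\not\prec(1100)^{\infty}$; this contradiction excludes $011$, and $100$ is excluded by reflection. Finally, a non-constant sequence avoiding both $011$ and $100$ is constant up to its first digit change and strictly alternating thereafter—a single occurrence of $01$ or $10$ propagates to $(01)^{\infty}$ or $(10)^{\infty}$—so it necessarily lies in $W$. Hence $U_q$ is contained in the claimed set, and combining the two inclusions completes the proof.

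The main obstacle is the combinatorial claim of the last paragraph. The two easy ingredients—the computation of $(\alpha_i)$ and the verification that each sequence in $W$ is univoque—are essentially mechanical, but making the forcing argument airtight is where the care lies: one must check that the induction admits no escape (no point at which the tail can legally drop below $(1100)^{\infty}$ while respecting both conditions) and must correctly use the strictness of $\prec$ at the endpoint $q=q_f$, where $(\alpha_i)$ equals $(1100)^{\infty}$ exactly and the borderline sequences $0^k(1100)^{\infty}$ must be ruled out.
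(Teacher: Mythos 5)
Your proof is correct. There is nothing in the paper to compare it against line by line: the paper does not prove this lemma but states it as a consequence of \cite[Theorem~2]{GlenSid}, so what you have produced is a self-contained derivation of exactly the cited fact on the range $(\frac{1+\sqrt{5}}{2},q_{f}]$. Your route --- the lexicographic characterisation of univoque sequences against the quasi-greedy expansion $\alpha$ of $1$, the computation that the greedy expansion of $1$ at $q_{f}$ is $1101\,0^{\infty}$ so that $\alpha(q_{f})=(1100)^{\infty}$, monotonicity of $\alpha$ in $q$, and the forcing argument showing that an occurrence of the factor $011$ propagates to the tail $(1100)^{\infty}$ and thereby violates the \emph{strict} inequality even at the endpoint $q=q_{f}$ --- is precisely the standard argument underlying the Glendinning--Sidorov theorem, and you have handled the two delicate points correctly: the strictness of $\prec$ at $q=q_{f}$ (which kills the borderline sequences ending in $(1100)^{\infty}$), and the verification that the $0$-condition at $p$ forces $\epsilon_{p+3}=\epsilon_{p+4}=0$ while the $1$-condition at $p+2$ then forces $\epsilon_{p+5}=\epsilon_{p+6}=1$, reproducing $011$ four places along. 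The only cosmetic suggestion is to state explicitly, when you pass from $\alpha$ to $(1100)^{\infty}$, that you are using the implication $(\text{tail}\prec\alpha\preceq(1100)^{\infty})\Rightarrow(\text{tail}\prec(1100)^{\infty})$, and to record that the reflection map interchanges the two lexicographic conditions so that the exclusion of $100$ genuinely follows from that of $011$.
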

In Lemma \ref{Unique expansions lemma} we have adopted the notation $(\epsilon_{1},\dots,\epsilon_{n})^{k}$ to denote the concatenation of $(\epsilon_{1},\dots ,\epsilon_{n})\in \{0,1\}^{n}$ by itself $k$ times and $(\epsilon_{1},\dots ,\epsilon_{n})^{\infty}$ to denote the element of $\{0,1\}^{\mathbb{N}}$ obtained by concatenating $(\epsilon_{1},\ldots,\epsilon_{n})$ by itself infinitely many times, we will use this notation throughout. Lemma \ref{Unique expansions lemma} will be a useful tool when it comes to showing that $(\frac{1+\sqrt{5}}{2},q_{\aleph_{0}})\cap \mathcal{B}_{\aleph_{0}}=\emptyset.$

\subsection{Branching argument}
To prove Theorem \ref{Main thm} we use a variation of the branching argument that first appeared in \cite{Sid2}. Before giving details of our approach we describe the construction given in \cite{Sid2}.
\subsubsection{Construction of the branching tree corresponding to $x$}

We define the \textit{branching tree corresponding to $x$} as follows. Suppose $x$ satisfies $\text{card }\Omega_{q}(x)=1,$ then we define the branching tree corresponding to $x$ to be an infinite horizontal line. If $x$ satisfies $\text{card } \Omega_{q}(x)>1,$ then there exists a unique minimal branching sequence $a,$ we depict this choice of transformation by a horizontal line of finite length that then bifurcates with an upper and lower branch. The upper branch corresponds to the sequence of transformations obtained by concatenating the branching sequence $a$ by $T_{q,0}$ and the lower branch corresponds to the sequence of transformations obtained by concatenating the branching sequence $a$ by $T_{q,1}$. If $T_{q,i}(a(x))$ satisfies $\Omega_{q}(T_{q,i}(a(x)))=1$ then we extend the branch corresponding to $T_{q,i}(a(x))$ by an infinite horizontal line. If $\Omega_{q}(T_{q,i}(a(x)))>1$ then there exists a unique minimal branching sequence for $T_{q,i}(a(x))$ that we call $a',$ we depict this choice of transformation by extending the branch corresponding to $T_{q,i}(a(x))$ by a horizontal line of finite length that then bifurcates, again the upper branch corresponds to concatenating $a'$ by $T_{q,0},$ and the lower branch corresponds to concatenating $a'$ by $T_{q,1}.$ Repeatedly applying these rules to succesive branching points of $x$ we construct an infinite tree which we refer to as the branching tree corresponding to $x$. We refer the reader to Figure \ref{fig1} for a diagram illustrating the construction of the branching tree corresponding to $x$. Where appropriate we denote the branching tree corresponding to $x$ by $\mathcal{T}(x)$. The branching tree corresponding to $x$ is referred to as the branching compactum in \cite{Sid2}. 
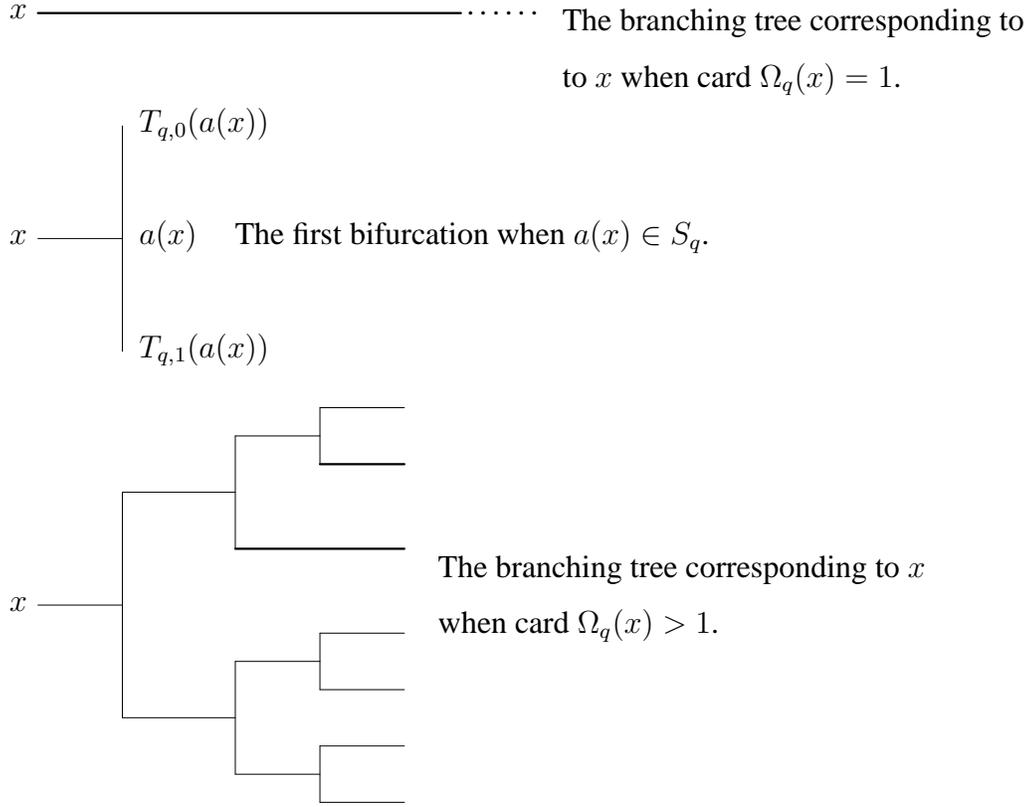
\begin{figure}[t]
\centering \unitlength=0.75mm
\begin{picture}(130,150)(0,0)
\thinlines
\path(-15,105)(0,105)
\path(0,105)(0,125)
\path(0,105)(0,85)
\put(61,144.8){\ldots\ldots}
\put(20,104){The first bifurcation when $a(x)\in S_{q}$.}
\put(78,142){The branching tree corresponding to}
\put(78,132){to $x$ when $\text{card }\Omega_{q}(x)=1$.}
\put(56,45){The branching tree corresponding to $x$}
\put(56,35){when $\text{card }\Omega_{q}(x)>1$.}
\put(-20,144){$x$}
\put(-20,104){$x$}
\put(3,104){$a(x)$}
\put(3,124){$T_{q,0}(a(x))$}
\put(3,84){$T_{q,1}(a(x))$}
\put(-20,39){$x$}
\path(-15,40)(0,40)
\path(0,20)(0,60)
\path(0,60)(20,60)
\path(0,20)(20,20)
\path(20,30)(20,10)
\path(20,70)(20,50)
\path(20,70)(35,70)
\path(20,10)(35,10)
\path(20,30)(35,30)
\path(35,15)(35,5)
\path(35,75)(35,65)
\path(35,35)(35,25)
\path(35,25)(50,25)
\path(35,5)(50,5)
\path(35,15)(50,15)
\path(35,35)(50,35)
\path(35,75)(50,75)
\thicklines
\path(-15,145)(60,145)
\path(20,50)(50,50)
\path(35,65)(50,65)
\end{picture}
\caption{The construction of the branching tree corresponding to $x$}
    \label{fig1}
\end{figure}
\begin{remark}
It is immediate from the construction of $\mathcal{T}(x)$ that there is a bijection between the space of infinite paths in $\mathcal{T}(x)$ and $\Omega_{q}(x),$ which by Lemma \ref{Bijection lemma} implies there is also a bijection between the space of infinite paths in $\mathcal{T}(x)$ and $\Sigma_{q}(x).$
\end{remark}

\subsubsection{Construction of the infinite branching tree corresponding to $x$}
We now give details of our variation of the above construction which will be more suited towards our exposition. Suppose $x\in I_{q}$ satisfies $\Omega_{q}(x)$ is infinite or equivalently $\Sigma_{q}(x)$ is infinite, we define the \textit{infinite branching tree corresponding to $x$} as follows. If for each branching point of $x,$ $a(x),$ we have $\text{card }\Omega_{q}(T_{q,i}(a(x)))<\infty,$ for some $i\in\{0,1\},$ then we define the infinite branching tree corresponding to $x$ to be an infinite horizontal line. If this is not the case then there exists a branching point $a(x)$ such that $\Omega_{q}(T_{q,0}(a(x)))$ and $\Omega_{q}(T_{q,1}(a(x)))$ are both infinite. Taking $a$ to be the unique minimal branching sequence of $x$ for which $\Omega_{q}(T_{q,0}(a(x)))$ and $\Omega_{q}(T_{q,1}(a(x)))$ are both infinite we draw a horizontal line of finite length which bifurcates, the upper branch corresponds to $T_{q,0}(a(x))$ and the lower branch corresponds to $T_{q,1}(a(x)).$ We then extend the branch corresponding to $T_{q,i}(a(x))$ in accordance with the same rules, i.e., if for each branching point of $T_{q,i}(a(x)),$ $a'(T_{q,i}(a(x))),$  we have $\text{card }\Omega_{q}(a'(T_{q,i}(a(x))))<\infty,$ for some $i\in\{0,1\},$ we extend the branch corresponding to $T_{q,i}(a(x))$ by an infinite horizontal line, and if there exists a branching point of $T_{q,i}(a(x))$, $a'(T_{q,i}(a(x))),$ such that both $\Omega_{q}(T_{q,0}(a'(T_{q,i}(a(x)))))$ and $\Omega_{q}(T_{q,1}(a'(T_{q,i}(a(x)))))$ are infinite, we extend the branch corresponding to $T_{q,i}(a(x))$ by a finite horizontal line that then bifurcates, with upper branching corresponding to $T_{q,0}(a'(T_{q,i}(a(x)))),$ and lower branch corresponding to $T_{q,1}(a'(T_{q,i}(a(x)))).$ Repeatedly applying these rules to each upper and lower branch of our construction we obtain an infinite tree that we refer to as the infinite branching tree corresponding to $x$. We refer the reader to Figure \ref{fig2} for a diagram illustrating the construction of the branching tree corresponding to $x$. Where appropriate we denote the infinite branching tree corresponding to $x$ by $\mathcal{T}_{\infty}(x)$. If $\mathcal{T}_{\infty}(x)$ contains at least one bifurcation then every branch except the initial horizontal branch begins at a point where $\mathcal{T}_{\infty}(x)$ bifurcates, we refer to this point as the \textit{root of the branch}. It is clear from the construction of $\mathcal{T}_{\infty}(x)$ that the root of a branch can be identified with a branching point of $x$.

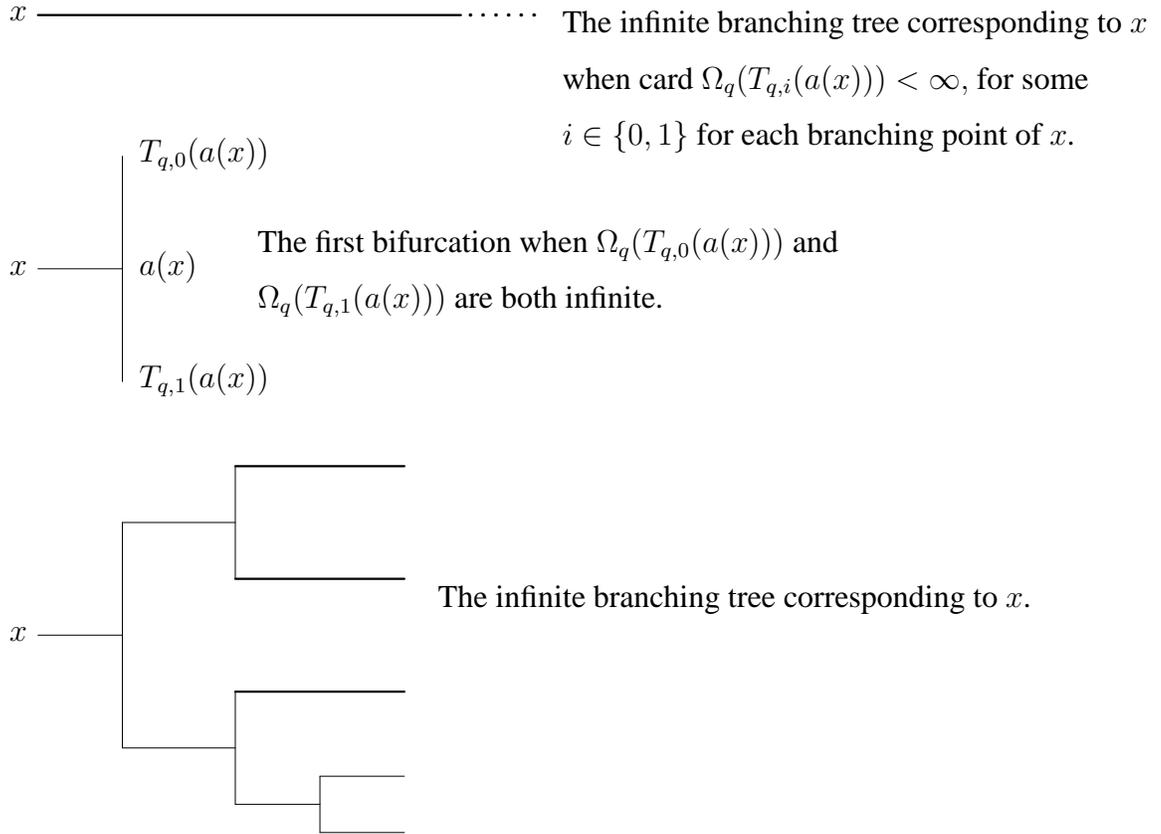
\begin{figure}[t]
\centering \unitlength=0.75mm
\begin{picture}(130,150)(0,0)
\thinlines
\path(-15,100)(0,100)
\path(0,100)(0,120)
\path(0,100)(0,80)
\put(61,144.8){\ldots\ldots}
\put(24,103){The first bifurcation when $\Omega_{q}(T_{q,0}(a(x)))$ and }
\put(24,93){$\Omega_{q}(T_{q,1}(a(x)))$ are both infinite.}
\put(78,142){The infinite branching tree corresponding to $x$ }
\put(78,132){when $\text{card }\Omega_{q}(T_{q,i}(a(x)))<\infty,$ for some }
\put(78,122){$i\in\{0,1\}$ for each branching point of $x$.}
\put(56,40){The infinite branching tree corresponding to $x$.}
\put(-20,144){$x$}
\put(-20,99){$x$}
\put(3,99){$a(x)$}
\put(3,119){$T_{q,0}(a(x))$}
\put(3,79){$T_{q,1}(a(x))$}
\put(-20,34){$x$}
\path(-15,35)(0,35)
\path(0,15)(0,55)
\path(0,55)(20,55)
\path(0,15)(20,15)
\path(20,25)(20,5)
\path(20,65)(20,45)

\path(20,5)(35,5)

\path(35,10)(35,0)

\path(35,0)(50,0)
\path(35,10)(50,10)

\thicklines
\path(-15,145)(60,145)
\path(20,45)(50,45)
\path(20,65)(50,65)
\path(20,25)(50,25)
\end{picture}
\caption{The construction of the infinite branching tree corresponding to $x$}
    \label{fig2}
\end{figure}

\begin{remark}
Every infinite path in $\mathcal{T}_{\infty}(x)$ can be identified with a unique element of $\Omega_{q}(x)$. However, unlike $\mathcal{T}(x)$ not every element of $\Omega_{q}(x)$ necessarily corresponds to a unique infinite path in $\mathcal{T}_{\infty}(x).$
\end{remark}

If $x$ satisfies $\Omega_{q}(x)$ is infinite and for each branching point $a(x)$ we have $\text{card }\Omega_{q}(T_{q,i}(a(x)))<\infty,$ for some $i\in\{0,1\},$ i.e., the case where the infinite branching tree is an infinite horizontal line, then we refer to $x$ as a \textit{$q$ null infinite point}. It is an immediate consequence of our definition that if $x$ is a $q$ null infinite point then $\text{card } \Omega_{q}(x)= \text{card } \Sigma_{q}(x)=\aleph_{0}.$

\begin{remark}
For $q\in (\frac{1+\sqrt{5}}{2},q_{f})\setminus\{q_{2}\},$ it is a consequence of Theorem \ref{Finite case thm} that if $x$ satisfies $\Omega_{q}(x)$ is infinite, then at each branching point $a(x),$ either both $\Omega_{q}(T_{q,0}(a(x)))$ and $\Omega_{q}(T_{q,1}(a(x)))$ are infinite or one of them is infinite and one of them is a singleton set, i.e, $T_{q,i}(a(x))\in U_{q}$ for some $i\in\{0,1\}$. As such, for $q\in (\frac{1+\sqrt{5}}{2},q_{f})\setminus\{q_{2}\}$ we may interpret $\mathcal{T}_{\infty}(x)$ as the infinite tree obtained from $\mathcal{T}(x)$ if we remove all branches that end in infinite horizontal lines.
\end{remark}

\begin{remark}
The case where $x$ is a $q$ null infinite point is of particular importance. By Theorem \ref{Finite case thm} it follows that for $q\in (\frac{1+\sqrt{5}}{2},q_{2})\cup (q_{2},q_{f}),$ if $x$ is a $q$ null infinite point then for each branching point of $x,$ $a(x),$ we must have $\text{card }\Omega_{q}(T_{q,i}(a(x)))=\aleph_{0}$ and $\text{card }\Omega_{q}(T_{q,1-i}(a(x)))=1,$ for some $i\in\{0,1\}$. 
\end{remark}
As the following proposition shows, it is in fact the case that whenever $q\in \mathcal{B}_{\aleph_{0}},$ then $(0,\frac{1}{q-1})$ contains a $q$ null infinite point.

\begin{prop}
\label{Branching prop}
Suppose $q\in \mathcal{B}_{\aleph_{0}},$ then $(0,\frac{1}{q-1})$ contains a $q$ null infinite point. 
\end{prop}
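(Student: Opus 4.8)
The plan is to run a cardinality dichotomy on the infinite branching tree $\mathcal{T}_{\infty}(x)$. Since $q\in\mathcal{B}_{\aleph_{0}}$, I would fix $x\in(0,\frac{1}{q-1})$ with $\text{card }\Sigma_{q}(x)=\aleph_{0}$; by Lemma \ref{Bijection lemma} we also have $\text{card }\Omega_{q}(x)=\aleph_{0}$, so $\Omega_{q}(x)$ is infinite and $\mathcal{T}_{\infty}(x)$ is defined. If $x$ is itself a $q$ null infinite point we are done immediately, so I would assume it is not, in which case $\mathcal{T}_{\infty}(x)$ contains at least one bifurcation. The goal is then to locate, somewhere in $\mathcal{T}_{\infty}(x)$, a branch that is an infinite horizontal line.

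The key observation I would isolate first is that every infinite horizontal branch of $\mathcal{T}_{\infty}(x)$ exhibits a $q$ null infinite point. Indeed, by construction such a branch has as its root a branching point $T_{q,i}(a(x))=:y$ with $\Omega_{q}(y)$ infinite whose own infinite branching tree is a horizontal line; this is exactly the definition of $y$ being a $q$ null infinite point. Moreover $\text{card }\Omega_{q}(y)=\aleph_{0}>1$ forces $y\neq 0,\frac{1}{q-1}$, so $y\in(0,\frac{1}{q-1})$. Thus the proposition reduces to showing that $\mathcal{T}_{\infty}(x)$ must contain at least one horizontal branch.

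To establish this I would argue by contradiction: suppose no branch of $\mathcal{T}_{\infty}(x)$ is an infinite horizontal line. Then, starting from the first bifurcation, every branch eventually bifurcates again, and recording at each successive bifurcation whether a path follows the upper ($T_{q,0}$) or lower ($T_{q,1}$) branch embeds the full binary tree $\{0,1\}^{<\mathbb{N}}$ into the bifurcation structure of $\mathcal{T}_{\infty}(x)$. Its infinite paths are therefore indexed by $\{0,1\}^{\mathbb{N}}$, giving continuum many of them. Since two distinct infinite paths first diverge at some bifurcation --- where, after a common branching sequence, one continues with $T_{q,0}$ and the other with $T_{q,1}$ --- they encode distinct sequences of transformations, and by the identification of each infinite path with a unique element of $\Omega_{q}(x)$ they yield distinct elements of $\Omega_{q}(x)$. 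Hence $\text{card }\Omega_{q}(x)\geq 2^{\aleph_{0}}$, contradicting $\text{card }\Omega_{q}(x)=\aleph_{0}$. Consequently $\mathcal{T}_{\infty}(x)$ has a horizontal branch, and we obtain the desired $q$ null infinite point.

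The hard part will be making the injection from infinite paths to expansions fully rigorous: the remark following the construction of $\mathcal{T}_{\infty}(x)$ warns that the correspondence between $\Omega_{q}(x)$ and infinite paths is \emph{not} a bijection, so I must check that distinct paths (which diverge at genuine bifurcations, where by construction both children carry infinite $\Omega_{q}$) really do record distinct transformation sequences, and that the tree obtained under the contradiction hypothesis genuinely contains a copy of $\{0,1\}^{<\mathbb{N}}$ and hence continuum many branches. Everything else is bookkeeping with the definitions of branching point, switch region, and $\mathcal{T}_{\infty}(x)$.
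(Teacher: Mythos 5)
Your argument is correct and is essentially the paper's own proof: both reduce to showing $\mathcal{T}_{\infty}(x)$ cannot bifurcate along every branch (else it is the full binary tree and $\text{card }\Omega_{q}(x)=2^{\aleph_{0}}$, contradicting $\aleph_{0}$), and then read off the $q$ null infinite point as the root of a non-bifurcating branch. Your extra care about the injectivity of paths into $\Omega_{q}(x)$ and about the root lying in $(0,\frac{1}{q-1})$ only makes explicit what the paper leaves implicit.
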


\begin{proof}
If $q\in \mathcal{B}_{\aleph_{0}}$ there exists $x\in (0,\frac{1}{q-1})$ satisfying $\text{card }\Omega_{q}(x)=\aleph_{0}.$  If $x$ is a $q$ null infinite point then we are done, let us assume this is not the case and that $\mathcal{T}_{\infty}(x)$ contains at least one bifurcation. If each branch of $\mathcal{T}_{\infty}(x)$ was to always bifurcate then $\mathcal{T}_{\infty}(x)$ would be the full binary tree, as each infinite path in $\mathcal{T}_{\infty}(x)$ can be identified with a unique element of $\Omega_{q}(x)$ and the set of infinite paths in the full binary tree has cardinality equal to the continuum we would have $\text{card }\Omega_{q}(x)=2^{\aleph_{0}},$ a contradiction. As such there must exist at least one branch that no longer bifurcates, by considering the root of this branch and the corresponding branching point $a(x)\in S_{q},$ either $T_{q,0}(a(x))$ or $T_{q,1}(a(x))$ must be a $q$ null infinite point.
\end{proof}
To prove Theorem \ref{Main thm} we first of all show that $(\frac{1+\sqrt{5}}{2},q_{\aleph_{0}})\cap \mathcal{B}_{\aleph_{0}}=\emptyset,$ we do this by contradiction. By Proposition \ref{Branching prop} if $q\in \mathcal{B}_{\aleph_{0}}$ then  $(0,\frac{1}{q-1})$ contains a $q$ null infinite point, by studying $q$ null infinite points we will be able to derive our desired contradiction.

\section{Proof of Theorem \ref{Main thm}}
\label{proof section}
Our proof of Theorem \ref{Main thm} will be split into two parts, we begin by showing that $(\frac{1+\sqrt{5}}{2},q_{\aleph_{0}})\cap \mathcal{B}_{\aleph_{0}}=\emptyset,$ we then explicitly construct an $x\in I_{q_{\aleph_{0}}}$ for which $\text{card }\Omega_{q_{\aleph_{0}}}(x)=\aleph_{0}$.

\subsection{Proof that $(\frac{1+\sqrt{5}}{2},q_{\aleph_{0}})\cap \mathcal{B}_{\aleph_{0}}=\emptyset$}
To show that $(\frac{1+\sqrt{5}}{2},q_{\aleph_{0}})\cap \mathcal{B}_{\aleph_{0}}=\emptyset$ it is useful to consider the following interval: $$J_{q}:=\Big[\frac{q+q^{2}}{q^{4}-1},\frac{1+q^{3}}{q^{4}-1}\Big].$$ The following identities hold 
\begin{equation}
\label{period 4 identities}
T_{q,1}\Big(T_{q,0}\Big(\frac{q+q^{2}}{q^{4}-1}\Big)\Big)= \frac{1+q^{3}}{q^{4}-1}\textrm{ and } T_{q,0}\Big(T_{q,1}\Big(\frac{1+q^{3}}{q^{4}-1}\Big)\Big)= \frac{q+q^{2}}{q^{4}-1},
\end{equation} it is an immediate consequence of (\ref{period 4 identities}) that $\frac{q+q^{2}}{q^{4}-1}=((0110)^{\infty})_{q}$ and $\frac{1+q^{3}}{q^{4}-1}=((1001)^{\infty})_{q}.$ The endpoints of $J_{q}$ are contained within a $4$-cycle $$\Big\{((0110)^{\infty})_{q},((1100)^{\infty})_{q},((1001)^{\infty})_{q},((0011)^{\infty})_{q}\Big\}.$$ For $q>q_{f}$ this cycle it is a subset of $U_{q},$ moreover, it is the first $4$-cycle to be a subset of $U_{q}$, see \cite{AllClaSid}. The significance of the interval $J_{q}$ is made clear by the following lemma.

\begin{lemma}
\label{inner switch lemma}
Let $q\in (\frac{1+\sqrt{5}}{2},q_{f}]$. Suppose $x\in I_{q}$ satisfies $\text{card }\Omega_{q}(x)>1,$ then there exists a finite sequence of transformations $a$ such that $a(x)\in J_{q}.$
\end{lemma}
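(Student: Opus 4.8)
The plan is to reduce the statement to a single interval analysis on the switch region and then run an itinerary argument. First I would record that $J_q\subseteq S_q$: the left endpoint inequality $\frac{q+q^2}{q^4-1}\ge \frac1q$ simplifies to $q^3-2q^2+q-1\le 0$, which holds on $(\frac{1+\sqrt5}{2},q_f]$ with equality precisely at $q_f$ (by definition of $q_f$), and the right endpoint inequality follows by the reflection symmetry. Here $R(x)=\frac{1}{q-1}-x$ maps $I_q$ to itself, conjugates $T_{q,0}$ and $T_{q,1}$ (so that $R\circ T_{q,0}=T_{q,1}\circ R$), and fixes both $S_q$ and $J_q$ setwise while swapping the endpoints $\frac{q+q^2}{q^4-1}$ and $\frac{1+q^3}{q^4-1}$. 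Since $\mathrm{card}\,\Omega_q(x)>1$ there is a minimal branching sequence carrying $x$ into $S_q$, and conversely every point of $S_q$ admits at least two expansions; so it suffices to prove the sharper claim that every $y\in S_q$ has a finite sequence $a$ with $a(y)\in J_q$. Using $R$, I may assume $y$ lies in the upper component $\bigl(\frac{1+q^3}{q^4-1},\frac{1}{q(q-1)}\bigr]$ of $S_q\setminus J_q$, the lower component being handled symmetrically.

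The core is a contradiction argument. Suppose some $y\in S_q$ reaches $J_q$ under no finite transformation sequence, and let $A$ be the set of points of $I_q$ from which $J_q$ is unreachable. Then $A$ is disjoint from $J_q$ and forward invariant in the sense that if $z\in A$ and $T_{q,i}(z)\in I_q$ then $T_{q,i}(z)\in A$, since a path into $J_q$ from $T_{q,i}(z)$ would prepend to give one from $z$. Between successive visits to $S_q$ the admissible transformation is forced (only $T_{q,0}$ keeps a point below $\frac1q$ in $I_q$, only $T_{q,1}$ keeps a point above $\frac{1}{q(q-1)}$ in $I_q$), while at each visit to $S_q$ both images lie in $A$. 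I would then track these forced itineraries for points in the two outer components of $S_q\setminus J_q$, using the monotonicity of $T_{q,0},T_{q,1}$ together with the explicit cycle identities $T_{q,1}(T_{q,0}(\frac{q+q^2}{q^4-1}))=\frac{1+q^3}{q^4-1}$ and $T_{q,0}(T_{q,1}(\frac{1+q^3}{q^4-1}))=\frac{q+q^2}{q^4-1}$ from \eqref{period 4 identities}, to show that avoiding the central interval $J_q=[\frac{q+q^2}{q^4-1},\frac{1+q^3}{q^4-1}]$ (which contains the centre $\frac{1}{2(q-1)}$) forces every expansion of $y$ to avoid the central digit blocks and hence to coincide with one of the unique-expansion sequences $0^k(10)^\infty$ or $1^k(10)^\infty$ classified in Lemma \ref{Unique expansions lemma}. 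Since for $q>\frac{1+\sqrt5}{2}$ the points $(10)^\infty=\frac{q}{q^2-1}$ and $(01)^\infty=\frac{1}{q^2-1}$ lie strictly above and below $S_q$ respectively, such orbits leave $S_q$ permanently and never branch again, contradicting that $y\in S_q$ admits at least two expansions.

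The main obstacle will be the precise interval bookkeeping in this last step: determining the forced transformation at each stage for points in $\bigl(\frac{1+q^3}{q^4-1},\frac{1}{q(q-1)}\bigr]$ and proving that such a point cannot re-enter $S_q\setminus J_q$ indefinitely without eventually landing in $J_q$ or collapsing onto the two-cycle $\{(01)^\infty,(10)^\infty\}$. The delicacy is that the decisive endpoint inequalities are exactly tight at $q_f$, where the endpoints of $J_q$ become the first $4$-cycle contained in $U_q$ (see \cite{AllClaSid}); this is the structural reason the hypothesis $q\le q_f$ is both needed and sharp, and the bookkeeping must be carried out carefully enough to survive this degeneration at the right endpoint of the allowed range.
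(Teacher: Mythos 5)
Your setup matches the paper's: the verification that $J_q\subseteq S_q$ (with equality exactly at $q=q_f$, via the factorisation $q^4-q^3-q^2-1=(q+1)(q^3-2q^2+q-1)$), the reduction to showing that every point of $S_q$ reaches $J_q$, and the reflection $R$ to halve the work are all correct. But the core of the lemma --- that a point of $S_q\setminus J_q$ actually lands in $J_q$ after finitely many steps --- is precisely the step you defer as ``the main obstacle,'' and the contradiction framework you put in its place does not close the gap. The claim that unreachability of $J_q$ forces every expansion of $y$ to coincide with one of the sequences $0^k(10)^\infty$, $1^k(10)^\infty$ is asserted rather than proved, and is essentially as strong as the lemma itself. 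Your final contradiction is also misdrawn: ``the orbit leaves $S_q$ permanently and never branches again'' is perfectly consistent with $y\in S_q$ having \emph{exactly} two expansions (one bifurcation at $y$, none afterwards), so it does not contradict ``$y$ admits at least two expansions.'' (The contradiction you would actually need is that two distinct members of the unique-expansion list represent distinct reals, so $y$ cannot carry two of them --- but you never reach that point.) The machinery of the forward-invariant set $A$ and forced itineraries is moreover unnecessary: the lemma asks only for the \emph{existence} of one finite sequence landing in $J_q$, so you may choose the transformation freely at every branch point; no control over all expansions is required.

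The missing idea is quantitative. Writing $g=T_{q,1}\circ T_{q,0}$, one has $g(y)-\frac{1}{q^{2}-1}=q^{2}\bigl(y-\frac{1}{q^{2}-1}\bigr)$, so $g$ is an affine expansion by the factor $q^{2}$ about its fixed point $\frac{1}{q^{2}-1}$, and for $q>\frac{1+\sqrt{5}}{2}$ the left component $[\frac1q,\frac{q+q^{2}}{q^{4}-1})$ of $S_q\setminus J_q$ lies strictly to the right of this fixed point. Hence the iterates $g^{n}(y)$ move away from $\frac{1}{q^{2}-1}$ geometrically and must leave the left component after finitely many steps; and since the period-$4$ identity shows $g$ maps $[\frac{1}{q^{2}-1},\frac{q+q^{2}}{q^{4}-1}]$ onto $[\frac{1}{q^{2}-1},\frac{1+q^{3}}{q^{4}-1}]$, the orbit cannot overshoot: the first iterate to exceed $\frac{q+q^{2}}{q^{4}-1}$ is still at most $\frac{1+q^{3}}{q^{4}-1}$, hence lies in $J_q$. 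The symmetric argument with $T_{q,0}\circ T_{q,1}$ and the fixed point $\frac{q}{q^{2}-1}$ handles the right component. This expansion-plus-no-overshoot mechanism is what the paper uses; without it (or some substitute) your ``interval bookkeeping'' has no reason to terminate, and monotonicity alone would permit the orbit to stall short of $J_q$.
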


\begin{proof}
It is a simple exercise to show that if $q\in (\frac{1+\sqrt{5}}{2},q_{f}]$ then $J_{q}\subseteq S_{q}$ with equality if and only if $q=q_{f}.$ Let $x\in I_{q}$ satisfy $\text{card } \Omega_{q}(x)>1,$ then there exists a finite sequence of transformations $a$ such that $a(x)\in S_{q}.$ If $q=q_{f}$ then we may immediately conclude our result, as such in what follows we assume $q\in (\frac{1+\sqrt{5}}{2},q_{f}).$ If $q\in(\frac{1+\sqrt{5}}{2},2),$ then $S_{q}\subset (\frac{1}{q^{2}-1},\frac{q}{q^{2}-1}).$ The significance of the points $\frac{1}{q^{2}-1}$ and $\frac{q}{q^{2}-1}$ is that $T_{q,0}(\frac{1}{q^{2}-1})=\frac{q}{q^{2}-1}$ and $T_{q,1}(\frac{q}{q^{2}-1})=\frac{1}{q^{2}-1}.$ If $y \in (\frac{1}{q^{2}-1},\frac{q}{q^{2}-1}),$ then the following identities hold:
\begin{equation}
\label{equation 1}
T_{q,1}(T_{q,0}(y))-\frac{1}{q^{2}-1}=q^{2}\Big(y-\frac{1}{q^{2}-1}\Big) \textrm{ and } \frac{q}{q^{2}-1}-T_{q,0}(T_{q,1}(y))= q^{2}\Big(\frac{q}{q^{2}-1}-y\Big),
\end{equation} i.e., $T_{q,1}\circ T_{q,0}$ scales the distance between $y$ and $\frac{1}{q^{2}-1}$ by a factor $q^{2}$ and $T_{q,0}\circ T_{q,1}$ scales the distance between $y$ and $\frac{q}{q^{2}-1}$ by a factor $q^{2}.$

Returning to $a(x)\in S_{q}$, if $a(x)\in J_{q}$ then we are done, let us suppose this is not the case and $a(x)\in S_{q}\setminus J_{q}= [\frac{1}{q}, \frac{q+q^{2}}{q^{4}-1})\cup (\frac{1+q^{3}}{q^{4}-1},\frac{1}{q(q-1)}].$ If $a(x)$ in $[\frac{1}{q}, \frac{q+q^{2}}{q^{4}-1})$ then it follows from (\ref{period 4 identities}), (\ref{equation 1}) and the monotonicity of the maps $T_{q,0}$ and $T_{q,1}$ that sufficiently many iterates of the map $T_{q,1}\circ T_{q,0}$ will map $a(x)$ into $J_{q}$, similarly if $a(x)\in (\frac{1+q^{3}}{q^{4}-1},\frac{1}{q(q-1)}]$ then sufficiently many iterates of the map $T_{q,0}\circ T_{q,1}$ will map $a(x)$ into $J_{q}.$
\end{proof}

To prove $(\frac{1+\sqrt{5}}{2},q_{\aleph_{0}})\cap \mathcal{B}_{\aleph_{0}}=\emptyset$ it is necessary to determine which elements of $J_{q}$ are preimages of points with unique $q$-expansion, these points are classified in the following proposition.

\begin{prop}
\label{Branching points prop}
Let $q\in(\frac{1+\sqrt{5}}{2},q_{\aleph_{0}}),$ then $$(T^{-1}_{q,0}(U_{q})\cap J_{q})\cup (T^{-1}_{q,1}(U_{q})\cap J_{q})=\{(01(10)^{\infty})_{q}, (011(10)^{\infty})_{q}, (10(01)^{\infty})_{q}, (100(01)^{\infty})_{q}\}.$$
\end{prop}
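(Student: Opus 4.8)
The plan is to characterize the elements of $J_q$ that map under $T_{q,0}$ or $T_{q,1}$ into the set $U_q$ of points with unique expansion, using the explicit description of $U_q$ from Lemma \ref{Unique expansions lemma}. First I would note that the four claimed points are exactly the $q$-preimages, under the two maps, of elements of the unique-expansion set. Concretely, applying $T_{q,0}$ to a point $x=(\epsilon_1,\epsilon_2,\dots)_q$ shifts the expansion by deleting a leading $0$ (it sends $(0,\epsilon_2,\epsilon_3,\dots)_q$ to $(\epsilon_2,\epsilon_3,\dots)_q$), and $T_{q,1}$ deletes a leading $1$. So $T_{q,0}^{-1}(U_q)$ consists of points whose expansion is $0$ followed by a unique expansion, and $T_{q,1}^{-1}(U_q)$ of points whose expansion is $1$ followed by a unique expansion. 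By Lemma \ref{Unique expansions lemma}, for $q\in(\frac{1+\sqrt5}{2},q_f]$ the unique expansions are precisely the sequences $0^k(10)^\infty$, $1^k(10)^\infty$ (with $k\ge 0$) together with the two endpoints $0=(0^\infty)_q$ and $\frac{1}{q-1}=(1^\infty)_q$.

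The core of the argument is then to intersect these candidate preimages with the interval $J_q=[(({0110})^\infty)_q,((1001)^\infty)_q]$ and keep only those candidates that actually lie in $J_q$. Since $J_q\subseteq S_q\subset(\frac{1}{q^2-1},\frac{q}{q^2-1})$, every point of $J_q$ has expansions beginning with both $0$ and $1$ (it is in the switch region), so the leading symbol is genuinely free; the real constraint is the numerical membership in $J_q$. I would therefore list the candidate expansions $0\cdot u$ and $1\cdot u$ for $u$ ranging over the unique expansions of Lemma \ref{Unique expansions lemma}, and test each against the two endpoint inequalities defining $J_q$. The four surviving sequences are $01(10)^\infty$, $011(10)^\infty$, $10(01)^\infty$ and $100(01)^\infty$; note the symmetry $\epsilon_i\mapsto 1-\epsilon_i$ (reflection of $I_q$ about its midpoint $\frac{1}{2(q-1)}$, which fixes $J_q$ and $S_q$ and swaps the two roles of $T_{q,0},T_{q,1}$) pairs $01(10)^\infty$ with $10(01)^\infty$ and $011(10)^\infty$ with $100(01)^\infty$, so it suffices to verify the two inequalities for the candidates beginning with $0$ and invoke symmetry for the rest.

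The main obstacle is the \emph{cutoff}: I must show that all the remaining candidate preimages—those of the form $0\cdot 0^k(10)^\infty$ and $0\cdot 1^k(10)^\infty$ for the wrong values of $k$, and the preimages of the endpoints $0$ and $\frac{1}{q-1}$—fall strictly \emph{outside} $J_q$, and that this holds for \emph{every} $q$ in the open interval $(\frac{1+\sqrt5}{2},q_{\aleph_0})$. This is where the specific value $q_{\aleph_0}$, the root of $x^6=x^4+x^3+2x^2+x+1$, must enter: the borderline candidate (plausibly the preimage corresponding to $k$ one step beyond the surviving case, e.g.\ a point with expansion near $(0111(10)^\infty)_q$ or $(0(10)^\infty)_q$) coincides with an endpoint of $J_q$ exactly when $q=q_{\aleph_0}$, and lies outside $J_q$ for smaller $q$. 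I would pin down which candidate is critical by comparing, as functions of $q$, the value $((\text{candidate})^{\phantom{0}})_q=\sum \epsilon_i q^{-i}$ with the endpoint $\frac{q+q^2}{q^4-1}$ (and its reflection with $\frac{1+q^3}{q^4-1}$), reduce the resulting inequality to a polynomial sign condition, and check that the governing polynomial is exactly (a factor of) $x^6-x^4-x^3-2x^2-x-1$. Establishing that this polynomial has the claimed sign throughout $(\frac{1+\sqrt5}{2},q_{\aleph_0})$—so the critical candidate stays outside $J_q$ while the four listed points stay inside—is the delicate step and the one that genuinely uses the hypothesis $q<q_{\aleph_0}$; the rest is routine verification of monotonicity and endpoint inequalities.
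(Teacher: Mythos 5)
Your proposal is correct and takes essentially the same route as the paper: both reduce the claim, via Lemma \ref{Unique expansions lemma}, to checking which preimages of the points of $U_{q}$ lie in $J_{q}$, which is the same finite list of polynomial endpoint inequalities, with the critical borderline case $(0111(10)^{\infty})_{q}=((1001)^{\infty})_{q}$ (equivalently $T_{q,0}((1001)^{\infty})_{q}=(111(10)^{\infty})_{q}$) producing exactly the root $q_{\aleph_{0}}$ of $x^{6}=x^{4}+x^{3}+2x^{2}+x+1$. The only cosmetic differences are that the paper phrases the check as locating the images $T_{q,i}$ of the two endpoints of $J_{q}$ in the gaps between consecutive elements of $U_{q}$, and it does not invoke your reflection symmetry to halve the casework.
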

\begin{proof}
By Lemma \ref{Unique expansions lemma} to prove our result it suffices to show that the following identities hold for $q\in(\frac{1+\sqrt{5}}{2},q_{\aleph_{0}})$:
\begin{enumerate}
	\item $T_{q,0}(\frac{q+q^{2}}{q^{4}-1})\in (((10)^{\infty})_{q},(1(10)^{\infty})_{q})$
	\item $T_{q,0}(\frac{1+q^{3}}{q^{4}-1})\in ((11(10)^{\infty})_{q},(111(10)^{\infty})_{q})$
	\item $T_{q,1}(\frac{q+q^{2}}{q^{4}-1})\in ((000(01)^{\infty})_{q},(00(01)^{\infty})_{q})$
	\item $T_{q,1}(\frac{1+q^{3}}{q^{4}-1})\in ((0(01)^{\infty},(01)^{\infty})_{q}).$
\end{enumerate}
Performing several straightforward calculations we can show that for $q\in(\frac{1+\sqrt{5}}{2},q_{\aleph_{0}})$ each of these identities hold. We remark that the upper bound $q_{\aleph_{0}}$ is optimal as $$T_{q_{\aleph_{0}},0}\Big(\frac{1+q_{\aleph_{0}}^{3}}{q_{\aleph_{0}}^{4}-1}\Big)=(111(10)^{\infty})_{q_{\aleph_{0}}} \textrm{ and }T_{q_{\aleph_{0}},1}\Big(\frac{q_{\aleph_{0}}+q_{\aleph_{0}}^{2}}{q_{\aleph_{0}}^{4}-1}\Big)=(000(01)^{\infty})_{q_{\aleph_{0}}}.$$

\end{proof}

We are now in a position to prove $(\frac{1+\sqrt{5}}{2},q_{\aleph_{0}})\cap \mathcal{B}_{\aleph_{0}}=\emptyset.$ Suppose $q\in (\frac{1+\sqrt{5}}{2},q_{\aleph_{0}})\cap \mathcal{B}_{\aleph_{0}},$ then by Proposition \ref{Branching prop} we may assume $x\in (0,\frac{1}{q-1})$ is a $q$ null infinite point. We let $a^{1}=(a^{1}_{1},\ldots ,a^{1}_{k_{1}})$ denote the unique minimal branching sequence of $x,$ since $\text{card }\Omega_{q}(T_{q,i}(a^{1}(x)))=1$ for some $i\in\{0,1\},$ there exists a unique minimal sequence of transformations $a^{2}=(a^{2}_{1},\ldots, a^{2}_{k_{2}})$ such that $a^{2}(a^{1}(x))\in S_{q}$. Similarly, for $i\geq 2$ we let $a^{i}=(a^{i}_{1},\ldots ,a^{i}_{k_{i}})$ denote the unique minimal sequence of transformations satisfying $a^{i}(a^{i-1}(\ldots(a^{1}(x))\ldots))\in S_{q}.$ We refer the reader to Figure \ref{fig3} for a diagram depicting the branching tree corresponding to $x$ when $x$ is a $q$ null infinite point and $q\in(\frac{1+\sqrt{5}}{2},q_{\aleph_{0}}),$ this diagram illustrates the role the of the sequence $(a^{i})_{i=1}^{\infty}$. For ease of exposition we denote the finite concatenation $a^{1}a^{2}\cdots a^{i}$ by $b^{i},$ therefore $a^{i}(a^{i-1}(\ldots(a^{1}(x))\ldots))=b^{i}(x).$ For $q\in (\frac{1+\sqrt{5}}{2},2),$ if $b^{i}(x)\in S_{q}$ then $T_{q,i}(b^{i}(x))\notin S_{q}$ for $i\in\{0,1\},$ this implies $k_{i}\geq 2$ for all $i\in \mathbb{N}.$ 
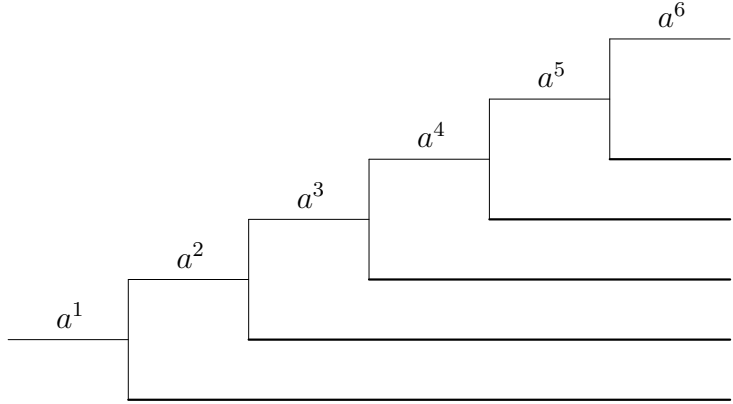
\begin{figure}[t]
\centering \unitlength=0.8mm
\begin{picture}(100,80)(0,0)
\thinlines
\path(-20,20)(0,20)
\path(0,30)(0,10)
\path(0,30)(20,30)
\path(20,40)(20,20)
\path(20,40)(40,40)
\path(40,50)(40,30)
\path(40,50)(60,50)
\path(60,60)(60,40)
\path(60,60)(80,60)
\path(80,70)(80,50)
\path(80,70)(100,70)
\put(-12,22){$a^{1}$}
\put(8,32){$a^{2}$}
\put(28,42){$a^{3}$}
\put(48,52){$a^{4}$}
\put(68,62){$a^{5}$}
\put(88,72){$a^{6}$}
\thicklines
\path(0,10)(100,10)
\path(20,20)(100,20)
\path(40,30)(100,30)
\path(60,40)(100,40)
\path(80,50)(100,50)
\end{picture}
\caption{The branching tree corresponding to $x$ when $x$ is a $q$ null infinite point and $q\in(\frac{1+\sqrt{5}}{2},q_{\aleph_{0}})$}
    \label{fig3}
\end{figure}

By Lemma \ref{inner switch lemma} we can assert that $b^{n}(x)\in J_{q}$ for some $n\geq 2$. Since $x$ is a $q$ null infinite point Proposition \ref{Branching points prop} implies $b^{n}(x)\in\{(01(10)^{\infty})_{q}, (011(10)^{\infty})_{q}, (10(01)^{\infty})_{q}, (100(01)^{\infty})_{q}\}.$ We now show that if $b^{n}(x)\in \{(01(10)^{\infty})_{q}, (011(10)^{\infty})_{q}, (10(01)^{\infty})_{q}, (100(01)^{\infty})_{q}\},$ then $T_{q,i}(b^{n-1}(x))\notin U_{q},$ for $i\in\{0,1\}.$ This will contradict our assumption that $x$ is a $q$ null infinite point and implies $(\frac{1+\sqrt{5}}{2},q_{\aleph_{0}})\cap \mathcal{B}_{\aleph_{0}}=\emptyset$.

If $a^{n}=(a^{n}_{1},\ldots ,a^{n}_{k_{n}})$ then without loss in generality we may assume that $a^{n}_{k_{n}}=T_{q,0}.$ The following lemma determines $a^{n}_{i}$ for $1\leq i< k_{n}$.
\begin{lemma}
\label{sequence lemma}
Let $x$ and $a^{n}$ be as above. Suppose $a^{n}_{k_{n}}=T_{q,0},$ then $a^{n}_{1}=T_{q,1}$ and $a^{n}_{i}= T_{q,0}$ for $1<i<k_{n}.$
\end{lemma}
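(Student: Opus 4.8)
The plan is to reduce the claim to a single explicit inequality satisfied by the four points of Proposition \ref{Branching points prop}, after recording an elementary dictionary describing position relative to $S_{q}$. Throughout write $d_{j}\in\{0,1\}$ for the digit with $a^{n}_{j}=T_{q,d_{j}}$, let $c$ be the $q$-expansion of $b^{n}(x)$ determined by the orbit, and set $z_{j}=(a^{n}_{j}\circ\cdots\circ a^{n}_{1})(b^{n-1}(x))$, so that $z_{0}=b^{n-1}(x)\in S_{q}$, $z_{k_{n}}=b^{n}(x)\in S_{q}$ and $z_{j}=(d_{j+1},\dots,d_{k_{n}},c)_{q}$. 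By minimality of $a^{n}$ none of the intermediate points $z_{1},\dots,z_{k_{n}-1}$ lies in $S_{q}$. The dictionary I would use is immediate from $y=d_{1}/q+(\text{tail})/q$ and the definition $S_{q}=[\tfrac1q,\tfrac1{q(q-1)}]$: if $y$ has leading digit $0$ then $y\in S_{q}$ exactly when its tail has value at least $1$, and $y<\tfrac1q$ otherwise; if $y$ has leading digit $1$ then $y\in S_{q}$ exactly when its tail has value at most $\tfrac{2-q}{q-1}$, and $y>\tfrac1{q(q-1)}$ otherwise.

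With this in hand I would first show $d_{2}=\dots=d_{k_{n}}=0$. We already know $d_{k_{n}}=0$ since $a^{n}_{k_{n}}=T_{q,0}$. Suppose some $d_{i}=1$ with $2\le i\le k_{n}-1$, and take the largest such $i$; then $d_{i+1}=\dots=d_{k_{n}}=0$, so $z_{i}=(0^{k_{n}-i},c)_{q}=b^{n}(x)\,q^{-(k_{n}-i)}$. The point $z_{i-1}$ has leading digit $d_{i}=1$ and lies outside $S_{q}$, so by the dictionary its tail $z_{i}$ exceeds $\tfrac{2-q}{q-1}$; since $k_{n}-i\ge1$ this yields $b^{n}(x)>q^{\,k_{n}-i}\,\tfrac{2-q}{q-1}\ge q\,\tfrac{2-q}{q-1}$. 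This contradicts the inequality $b^{n}(x)\le q\,\tfrac{2-q}{q-1}$, which I would verify for each of $(01(10)^{\infty})_{q}$, $(011(10)^{\infty})_{q}$, $(10(01)^{\infty})_{q}$, $(100(01)^{\infty})_{q}$ and each $q\in(\tfrac{1+\sqrt5}{2},q_{\aleph_{0}})$. Once $d_{2}=\dots=d_{k_{n}}=0$ is known, $d_{1}=1$ comes for free: were $d_{1}=0$ we would have $b^{n-1}(x)=(0^{k_{n}},c)_{q}=b^{n}(x)\,q^{-k_{n}}<\tfrac1q$, using $b^{n}(x)<1$ (as $\tfrac1{q(q-1)}<1$ for $q>\tfrac{1+\sqrt5}{2}$) and $k_{n}\ge2$, so $b^{n-1}(x)$ would sit in $[0,\tfrac1q)$ rather than $S_{q}$, a contradiction. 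Hence $a^{n}_{1}=T_{q,1}$ and $a^{n}_{i}=T_{q,0}$ for $1<i<k_{n}$.

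I expect the one genuinely computational point to be the inequality $b^{n}(x)\le q\,\tfrac{2-q}{q-1}$; the rest is the dictionary and bookkeeping. The only care needed is that this inequality stays valid (and strict) across the whole open interval, the threshold $q\,\tfrac{2-q}{q-1}$ tending to $1$ as $q\to\tfrac{1+\sqrt5}{2}$, and that the largest of the four points, $(10(01)^{\infty})_{q}$, remains the binding case. Finally I would note that the reduction to $a^{n}_{k_{n}}=T_{q,0}$ is harmless by the symmetry $x\mapsto\tfrac1{q-1}-x$, which interchanges $T_{q,0}$ and $T_{q,1}$ while fixing $S_{q}$, $J_{q}$ and $U_{q}$, so the complementary case $a^{n}_{k_{n}}=T_{q,1}$ follows.
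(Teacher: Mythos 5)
Your proof is correct and reduces to exactly the same computation as the paper's: the impossibility of $T^{-1}_{q,0}(b^{n}(x))\geq T_{q,1}(\frac{1}{q(q-1)})$, i.e.\ of $b^{n}(x)\geq q\,\frac{2-q}{q-1}$, for the four points of Proposition \ref{Branching points prop} throughout $(\frac{1+\sqrt{5}}{2},q_{\aleph_{0}})$, with $(10(01)^{\infty})_{q}$ the binding case just as in the paper's inequality (3). The only difference is organizational and works in your favour: by isolating the \emph{last} occurrence of $T_{q,1}$ you get $z_{i}=q^{-(k_{n}-i)}b^{n}(x)$ explicitly and so avoid the paper's monotonicity tracking of the orbit through the regions above and below $S_{q}$, after which $a^{n}_{1}=T_{q,1}$ falls out trivially from $b^{n-1}(x)\in S_{q}$ rather than being the main case.
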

\begin{proof}
We begin by showing $a^{n}_{1}=T_{q,1}$. We suppose $a^{n}_{1}=T_{q,0}$ and derive a contradiction. If $a^{n}_{1}=T_{q,0},$ then $T_{q,0}(b^{n-1}(x))\in (\frac{1}{q(q-1)},\frac{1}{q-1}]$ and $k_{n}\geq 3.$ Letting $a'=(a^{n}_{2},\ldots, a^{n}_{k_{n}-1})$ it follows that $a'(T_{q,0}(b^{n-1}(x)))= T^{-1}_{q,0}(a^{n}(x))$ and by the minimality of $a^{n}$ we must have $$(a^{n}_{i}\circ \cdots \circ a^{n}_{2})(T_{q,0}(b^{n-1}(x)))\notin S_{q}$$ for $2\leq i \leq k_{n}-1$, we now show that this is not possible. 

If $a'(T_{q,0}(b^{n-1}(x)))= T^{-1}_{q,0}(a^{n}(x))$ and $(a^{n}_{i}\circ \cdots \circ a^{n}_{2})(T_{q,0}(b^{n-1}(x)))\notin S_{q}$ for all $2\leq i \leq k_{n}-1$, then it is a consequence of $T_{q,1}$ being strictly decreasing on $(0,\frac{1}{q-1})$ and $T_{q,0}$ being strictly increasing on $(0,\frac{1}{q-1})$ that we must have
\begin{equation}
\label{Case equation}
T_{q,1}\Big(\frac{1}{q(q-1)}\Big)\leq T^{-1}_{q,0}(b^{n}(x)).
\end{equation}
By our assumption $b^{n}(x)\in\{(01(10)^{\infty})_{q}, (011(10)^{\infty})_{q}, (10(01)^{\infty})_{q}, (100(01)^{\infty})_{q}\},$ therefore to derive our contradiction it suffices to show that (\ref{Case equation}) does not hold for each of these four cases. As such, to conclude $a^{n}_{1}=T_{q,1}$ we need to show that following inequalities hold:

\begin{enumerate}
	\item $T^{-1}_{q,0}((01(10)^{\infty})_{q})<T_{q,1}\Big(\frac{1}{q(q-1)}\Big)$
	\item $T^{-1}_{q,0}((011(10)^{\infty})_{q})<T_{q,1}\Big(\frac{1}{q(q-1)}\Big)$
	\item $T^{-1}_{q,0}((10(01)^{\infty})_{q})<T_{q,1}\Big(\frac{1}{q(q-1)}\Big)$
	\item $T^{-1}_{q,0}((100(01)^{\infty})_{q})<T_{q,1}\Big(\frac{1}{q(q-1)}\Big).$
\end{enumerate} By the monotonicity of the map $T^{-1}_{q,0}$ it suffices to show only (2) and (3) hold. We can show that (2) holds for $q\in(\frac{1+\sqrt{5}}{2},1.69765\ldots),$ here $1.69765\ldots$ is the appropriate root of $x^6=x^5+2x^4-x^3-x^2+1.$ Similarly (3) holds for $q\in(\frac{1+\sqrt{5}}{2},1.68042\ldots)$ where $1.68042\ldots$ is the appropriate root of $x^5=x^4+x^3+x-1.$ Therefore (\ref{Case equation}) does not hold and we may conclude $a^{n}_{1}=T_{q,1}.$ 

It remains to show $a^{n}_{i}= T_{q,0}$ for $1<i<k_{n}.$ Suppose $a^{n}_{i}= T_{q,1}$ for some $1<i<k_{n},$ then either $$(a^{n}_{i-1}\circ \cdots \circ a^{n}_{1})(b^{n-1}(x))\in S_{q}\textrm{ or }(a^{n}_{i-1}\circ \cdots \circ a^{n}_{1})(b^{n-1}(x))\in \Big(\frac{1}{q(q-1)},\frac{1}{q-1}\Big].$$ As a consequence of the minimality of $a^{n}$ we cannot have $(a^{n}_{i-1}\circ \cdots \circ a^{n}_{1})(b^{n-1}(x))\in S_{q}.$ By analogous reasoning to that stated in the first part of our proof, the minimality of $a^{n}$ also implies that we cannot have $(a^{n}_{i-1}\circ \cdots \circ a^{n}_{1})(b^{n-1}(x))\in (\frac{1}{q(q-1)},\frac{1}{q-1}].$ We may therefore conclude $a^{n}_{i}= T_{q,0}$ for all $1<i<k_{n}.$
\end{proof}
By Lemma \ref{sequence lemma} we have $b^{n}(x)= (T^{k_{n}}_{q,0}\circ T_{q,1})(b^{n-1}(x)),$ since $x$ is a $q$ null infinite point we have $T_{q,0}(b^{n-1}(x))\in U_{q}.$ This is equivalent to 
\begin{equation}
\label{uniqueness equation}
T^{-k_{n}}_{q,0}(b^{n}(x))+1\in U_{q}.
\end{equation} To derive our contradiction we will show that (\ref{uniqueness equation}) cannot occur for all $q\in (\frac{1+\sqrt{5}}{2},q_{\aleph_{0}})$. Since $b^{n}(x)\in\{(01(10)^{\infty})_{q}, (011(10)^{\infty})_{q}, (10(01)^{\infty})_{q}, (100(01)^{\infty})_{q}\}$ there are four cases to consider, the analysis of these cases is summarised in the following proposition.

\begin{prop}
\label{First half prop}
For $q\in(\frac{1+\sqrt{5}}{2},q_{\aleph_{0}})$ the following inequalities hold:
\begin{enumerate}
	\item $T^{-1}_{q,0}((01(10)^{\infty})_{q})+1\in ((111(10)^{\infty})_{q},(1111(10)^{\infty})_{q})$
	\item $T^{-2}_{q,0}((01(10)^{\infty})_{q})+1\in ((1(10)^{\infty})_{q},(11(10)^{\infty})_{q})$
	\item $T^{-j}_{q,0}((01(10)^{\infty})_{q})+1\in (((10)^{\infty}),(1(10)^{\infty})_{q})$ for all $j\geq 3$
	\item $T^{-1}_{q,0}((011(10)^{\infty})_{q})+1\in ((1111(10)^{\infty})_{q},(11111(10)^{\infty})_{q})$
	\item $T^{-2}_{q,0}((011(10)^{\infty})_{q})+1\in ((1(10)^{\infty})_{q},(11(10)^{\infty})_{q})$
	\item $T^{-j}_{q,0}((011(10)^{\infty})_{q})+1\in (((10)^{\infty})_{q},(1(10)^{\infty})_{q})$ for all $j\geq 3$
	\item $T^{-1}_{q,0}((100(01)^{\infty})_{q})+1\in ((111(10)^{\infty})_{q},(1111(10)^{\infty})_{q})$
	\item $T^{-2}_{q,0}((100(01)^{\infty})_{q})+1\in ((1(10)^{\infty})_{q},(11(10)^{\infty})_{q})$
	\item $T^{-j}_{q,0}((100(01)^{\infty})_{q})+1\in (((10)^{\infty}),(1(10)^{\infty})_{q})$ for all $j\geq 3$
	\item $T^{-1}_{q,0}((10(01)^{\infty})_{q})+1\in ((1111(10)^{\infty})_{q},(11111(10)^{\infty})_{q})$
	\item $T^{-2}_{q,0}((10(01)^{\infty})_{q})+1\in ((1(10)^{\infty})_{q},(11(10)^{\infty})_{q})$
	\item $T^{-j}_{q,0}((10(01)^{\infty})_{q})+1\in (((10)^{\infty})_{q},(1(10)^{\infty})_{q})$ for all $j\geq 3$
\end{enumerate}
\end{prop}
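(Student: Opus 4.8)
The plan is to turn each of the twelve inclusions into a pair of polynomial inequalities in $q$ and to verify these on $(\frac{1+\sqrt{5}}{2},q_{\aleph_{0}})$. First I would record closed forms: since $T_{q,0}(x)=qx$ we have $T^{-j}_{q,0}(x)=x/q^{j}$, and each point occurring in the statement is an eventually periodic binary expansion, hence a geometric series that sums to an explicit rational function of $q$ whose denominator is a product of the strictly positive factors $q^{j}$ and $q^{2}-1$; for example $((10)^{\infty})_{q}=\frac{q}{q^{2}-1}$ and $(01(10)^{\infty})_{q}=\frac{q^{2}+q-1}{q^{2}(q^{2}-1)}$, with the remaining three branching points and all the endpoints $(1^{m}(10)^{\infty})_{q}$ computed in the same way. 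Because every denominator is positive for $q\in(1,2)$, an inclusion $y\in(L,R)$ is equivalent, after multiplying through by the common denominator, to the two polynomial inequalities $y-L>0$ and $R-y>0$.

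The key simplification is to exploit monotonicity in two directions. Writing $p$ for one of the four branching-point values, the quantity $T^{-j}_{q,0}(p)+1=p/q^{j}+1$ is strictly decreasing in $j$ with limit $1$, which immediately collapses each infinite family (the cases $j\geq 3$ in (3), (6), (9), (12)) to a single check. Indeed the lower endpoint there is $((10)^{\infty})_{q}=\frac{q}{q^{2}-1}$, and since $p>0$ we have $p/q^{j}+1>1$, while $\frac{q}{q^{2}-1}<1$ is exactly the inequality $q^{2}-q-1>0$, i.e. $q>\frac{1+\sqrt{5}}{2}$; thus the lower bound is automatic on our interval and only the upper bound needs checking, and by monotonicity in $j$ it is tightest at $j=3$. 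Monotonicity of $T^{-j}_{q,0}$ together with the ordering $(01(10)^{\infty})_{q}<(100(01)^{\infty})_{q}<(011(10)^{\infty})_{q}<(10(01)^{\infty})_{q}$ of the four points further lets me, exactly as in the proof of Proposition \ref{Branching points prop}, reduce the boundary inequalities at each fixed $j$ to the two extreme points in a given window. After these reductions only finitely many polynomial inequalities remain, one per endpoint for $j\in\{1,2,3\}$.

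Each of these finitely many inequalities asserts that an explicit polynomial in $q$ is positive throughout $(\frac{1+\sqrt{5}}{2},q_{\aleph_{0}})$, which I would confirm by locating its real roots (or via a Sturm sequence) and checking that none lies in the open interval and that the sign at one interior point is correct. Granting the proposition, the four windows are open intervals whose endpoints are consecutive elements of $U_{q}$ by Lemma \ref{Unique expansions lemma}, so $T^{-k_{n}}_{q,0}(b^{n}(x))+1\notin U_{q}$; this contradicts (\ref{uniqueness equation}) and completes the proof that $(\frac{1+\sqrt{5}}{2},q_{\aleph_{0}})\cap\mathcal{B}_{\aleph_{0}}=\emptyset$. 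The main obstacle is the bookkeeping of signs in this last step: one must be sure that each polynomial $y-L$ and $R-y$ genuinely keeps its sign across the whole interval, with no spurious root creeping in near the endpoints $\frac{1+\sqrt{5}}{2}$ or $q_{\aleph_{0}}$, and this is the only place where a careful rather than routine computation is required.
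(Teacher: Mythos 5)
Your proposal is correct and follows essentially the same route as the paper, which reduces each inclusion to explicit polynomial inequalities in $q$ (via the geometric-series closed forms) and verifies them, recording only the critical roots such as those of $x^5=2x^3+x^2+1$ and $x^6=2x^4+x^3+1$; your monotonicity reductions in $j$ and in the ordering of the four branching points are a sound way to organise the finitely many checks the paper leaves implicit.
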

\begin{proof}
Showing that these identities hold is a simple yet time consuming exercise, as such we omit the details. Our calculations yielded the following:
\begin{itemize}
	\item (1), (2) and (3) hold for $q\in(\frac{1+\sqrt{5}}{2},1.67602\ldots)$, where $1.67602\ldots$ is the appropriate root of $x^5=2x^3+x^2+1$
	\item (4), (5) and (6) hold for $q\in(\frac{1+\sqrt{5}}{2},1.65462\ldots)$, where $1.65462\ldots$ is the appropriate root of $x^6=2x^4+x^3+1$
	\item (7), (8) and (9) hold for $q\in(\frac{1+\sqrt{5}}{2},1.666184\ldots)$, where $1.66184\ldots$ is the appropriate root of $x^5=x^3+x^2+2x+2$
	\item (10), (11) and (12) hold for $q\in(\frac{1+\sqrt{5}}{2},q_{\aleph_{0}}).$
\end{itemize}
\end{proof}
By Proposition \ref{First half prop} we can deduce that (\ref{uniqueness equation}) does not hold and we have our desired contradiction, we may therefore conclude $(\frac{1+\sqrt{5}}{2},q_{\aleph_{0}})\cap \mathcal{B}_{\aleph_{0}}=\emptyset.$

\subsection{Proof that $q_{\aleph_{0}}\in \mathcal{B}_{\aleph_{0}}$}
By the above remarks to conclude Theorem \ref{Main thm} it suffices to show that $q_{\aleph_{0}}\in \mathcal{B}_{\aleph_{0}}.$ The proof of this statement is contained within the following proposition.

\begin{prop}
\label{second half prop}
$\frac{q_{\aleph_{0}}+q_{\aleph_{0}}^{2}}{q_{\aleph_{0}}^{4}-1}$ and $\frac{1+q_{\aleph_{0}}^{3}}{q_{\aleph_{0}}^{4}-1}$ have countably infinite $q_{\aleph_{0}}$-expansions.
\end{prop}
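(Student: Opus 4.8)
The plan is to show that each endpoint of $J_{q_{\aleph_0}}$ is a $q_{\aleph_0}$ null infinite point, which by our earlier observation forces $\text{card}\,\Sigma_{q_{\aleph_0}}(x)=\aleph_0$. By symmetry (the two endpoints are related by the reflection $x\mapsto \frac{1}{q-1}-x$, which swaps $T_{q,0}\leftrightarrow T_{q,1}$), it suffices to treat one of them, say $p:=\frac{q+q^2}{q^4-1}=((0110)^\infty)_{q}$, and transfer the conclusion to $\frac{1+q^3}{q^4-1}$ by this symmetry. So first I would record that symmetry explicitly and reduce to a single endpoint.

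Next I would exploit the special algebraic role of $q_{\aleph_0}$. The optimality remark in the proof of Proposition \ref{Branching points prop} states that at $q=q_{\aleph_0}$ we have exactly
\[
T_{q_{\aleph_0},1}\Big(\frac{q_{\aleph_0}+q_{\aleph_0}^2}{q_{\aleph_0}^4-1}\Big)=(000(01)^\infty)_{q_{\aleph_0}}\in U_{q_{\aleph_0}},
\]
so the lower branch out of $p$ lands in a point with a \emph{unique} expansion. The key structural claim to establish is then that the orbit of $p$ under the branching dynamics is \emph{self-reproducing}: the upper branch $T_{q,0}(p)$, after passing through some controlled finite string of transformations, returns to a point in $J_{q}$ that is again one of the four distinguished preimage points of Proposition \ref{Branching points prop}, at which the geometry repeats. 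Concretely I would compute the minimal branching sequence issuing from $p$, verify via Lemma \ref{inner switch lemma} that iterating brings us back into $J_{q}$, and check using the identities (\ref{period 4 identities}) together with the defining relation $q_{\aleph_0}^6=q_{\aleph_0}^4+q_{\aleph_0}^3+2q_{\aleph_0}^2+q_{\aleph_0}+1$ that the returning point is again in the distinguished set. This produces an infinite branching tree that is an infinite horizontal line with a unique-expansion branch hanging off at every bifurcation, i.e.\ exactly the defining picture of a $q$ null infinite point.

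The mechanism making $\text{card}\,\Omega_{q_{\aleph_0}}(p)=\aleph_0$ is that at each of the countably many branching points one side enters $U_{q_{\aleph_0}}$ (contributing a single path) while the other side continues the spine; summing one new path per bifurcation over the infinitely many bifurcations gives $\aleph_0$ total paths, and never $2^{\aleph_0}$ because the tree never bifurcates on both sides simultaneously. I would phrase this as: $p$ satisfies the hypothesis in the definition of a $q$ null infinite point, namely that for every branching point $a(p)$ one has $\text{card}\,\Omega_{q_{\aleph_0}}(T_{q_{\aleph_0},i}(a(p)))<\infty$ (in fact $=1$) for some $i\in\{0,1\}$; the concluding sentence after that definition then gives the count $\aleph_0$ for free.

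The main obstacle I anticipate is verifying the self-reproducing step cleanly: I must pin down the exact finite transformation word carrying $T_{q_{\aleph_0},0}(p)$ back into $J_{q_{\aleph_0}}$ and confirm the returning point is one of the four special points rather than some generic branching point, and this is precisely where the sextic minimal polynomial of $q_{\aleph_0}$ is forced to appear — if any other base were used the orbit would either leave $J_q$ at a point with nonunique expansion on \emph{both} sides (producing continuum-many expansions) or land at a point in $U_q$ on both sides (producing finitely many), so the countability is a knife-edge phenomenon tied to the equality cases flagged as optimal in Proposition \ref{Branching points prop}. The bookkeeping of these finite orbit segments, and checking that the two endpoints genuinely recur and do not eventually escape $J_{q_{\aleph_0}}$ or collapse into $U_{q_{\aleph_0}}$ on both branches, is the delicate part; the rest is routine substitution using (\ref{period 4 identities}) and (\ref{equation 1}).
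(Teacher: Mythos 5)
Your proposal is correct and follows essentially the same route as the paper: both rest on the period-$4$ identities (\ref{period 4 identities}), the equalities $T_{q_{\aleph_{0}},1}(\frac{q_{\aleph_{0}}+q_{\aleph_{0}}^{2}}{q_{\aleph_{0}}^{4}-1})=(000(01)^{\infty})_{q_{\aleph_{0}}}\in U_{q_{\aleph_{0}}}$ and $T_{q_{\aleph_{0}},0}(\frac{1+q_{\aleph_{0}}^{3}}{q_{\aleph_{0}}^{4}-1})=(111(10)^{\infty})_{q_{\aleph_{0}}}\in U_{q_{\aleph_{0}}}$, and the observation that the remaining branch at each endpoint leaves the switch region and is forced around the $4$-cycle back to the other endpoint, so the infinite branching tree is a single spine shedding one unique-expansion path at each bifurcation. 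The only differences are cosmetic: the paper skips your symmetry reduction and simply lists the resulting expansion sets explicitly, and your ``return to one of the four distinguished points of Proposition \ref{Branching points prop}'' should really read ``return to the other endpoint of $J_{q_{\aleph_{0}}}$, which at $q=q_{\aleph_{0}}$ is itself a preimage of $U_{q_{\aleph_{0}}}$ by the optimality equalities'' (that proposition is stated only for $q<q_{\aleph_{0}}$), with the return being the single forced step $T_{q,1}$ rather than an appeal to Lemma \ref{inner switch lemma}.
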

\begin{proof}
To begin with we recall that $$T_{q,1}\Big(T_{q,0}\Big(\frac{q+q^{2}}{q^{4}-1}\Big)\Big)= \frac{1+q^{3}}{q^{4}-1}\textrm{ and } T_{q,0}\Big(T_{q,1}\Big(\frac{1+q^{3}}{q^{4}-1}\Big)\Big)= \frac{q+q^{2}}{q^{4}-1},$$ for all $q\in (1,2)$. As stated in the proof of Proposition \ref{Branching points prop} $$T_{q_{\aleph_{0}},0}\Big(\frac{1+q_{\aleph_{0}}^{3}}{q_{\aleph_{0}}^{4}-1}\Big)=(111(10)^{\infty})_{q_{\aleph_{0}}} \textrm{ and }T_{q_{\aleph_{0}},1}\Big(\frac{q_{\aleph_{0}}+q_{\aleph_{0}}^{2}}{q_{\aleph_{0}}^{4}-1}\Big)=(000(01)^{\infty})_{q_{\aleph_{0}}}.$$ Since $$T_{q_{\aleph_{0}},0}\Big(\frac{q_{\aleph_{0}}+q_{\aleph_{0}}^{2}}{q_{\aleph_{0}}^{4}-1}\Big)\in \Big(\frac{1}{q_{\aleph_{0}}(q_{\aleph_{0}}-1)},\frac{1}{q_{\aleph_{0}}-1}\Big]\textrm{ and } T_{q_{\aleph_{0}},1}\Big(\frac{1+q_{\aleph_{0}}^{3}}{q_{\aleph_{0}}^{4}-1}\Big)\in\Big[0,\frac{1}{q_{\aleph_{0}}}\Big),$$ it follows that $$\Sigma_{q_{\aleph_{0}}}\Big(\frac{1+q_{\aleph_{0}}^{3}}{q_{\aleph_{0}}^{4}-1}\Big)=\Big\{(1001)^{\infty}, (1001)^{k}0111(10)^{\infty}, (1001)^{k}101000(01)^{\infty}| \textrm{ for some } k\geq 0\Big\}$$ and $$\Sigma_{q_{\aleph_{0}}}\Big(\frac{q_{\aleph_{0}}+q_{\aleph_{0}}^{2}}{q_{\aleph_{0}}^{4}-1}\Big)=\Big\{(0110)^{\infty},(0110)^{k}1000(01)^{\infty}, (0110)^{k}010111(10)^{\infty} | \textrm{ for some } k\geq 0\Big\}.$$
\end{proof} It is immediate from the proof of Proposition \ref{second half prop} that both $\frac{q_{\aleph_{0}}+q_{\aleph_{0}}^{2}}{q_{\aleph_{0}}^{4}-1}$ and $\frac{1+q_{\aleph_{0}}^{3}}{q_{\aleph_{0}}^{4}-1}$ are $q_{\aleph_{0}}$ null infinite points. By Proposition \ref{second half prop} we have $q_{\aleph_{0}}\in \mathcal{B}_{\aleph_{0}}$ and by our earlier remarks we may conclude Theorem \ref{Main thm}. 
\section{General results}
\label{Final discussion}
In this section we shall prove some general results that arose from our proof of Theorem \ref{Main thm}.
\subsection{The continuum hypothesis for $\Sigma_{q}(x)$}
In this section we show that the following theorem holds.
\begin{thm}
\label{continuum theorem}
Let $q\in(1,2)$ and $x\in I_{q},$ if $\Sigma_{q}(x)$ is uncountable then $\text{card }\Sigma_{q}(x)=2^{\mathbb{\aleph}_{0}}.$
\end{thm}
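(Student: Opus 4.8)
The plan is to use the tree structures $\mathcal{T}(x)$ and $\mathcal{T}_{\infty}(x)$ developed in Section \ref{Section 2} together with the branching dichotomy established in Proposition \ref{Branching prop}. Recall that $\text{card }\Sigma_{q}(x)=\text{card }\Omega_{q}(x)$ by Lemma \ref{Bijection lemma}, and that the infinite paths through $\mathcal{T}(x)$ are in bijection with $\Omega_{q}(x)$. Since every element of $\Omega_{q}(x)$ lives in $\{T_{q,0},T_{q,1}\}^{\mathbb{N}}$, we always have the trivial upper bound $\text{card }\Sigma_{q}(x)\le 2^{\aleph_{0}}$. The entire content of the theorem is therefore the lower bound: I would show that if $\Sigma_{q}(x)$ is uncountable, then $\mathcal{T}(x)$ contains a subtree isomorphic to the full binary tree, whose space of infinite paths already has cardinality $2^{\aleph_{0}}$, forcing $\text{card }\Sigma_{q}(x)\ge 2^{\aleph_{0}}$.

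First I would formalize the mechanism already implicit in the proof of Proposition \ref{Branching prop}. That proof observes that if a branch of $\mathcal{T}_{\infty}(x)$ failed to bifurcate, the relevant branching point would produce a $q$ null infinite point on one side. Turning this around: I want to argue by contraposition that if $\Sigma_{q}(x)$ is uncountable then there must be ``enough'' points whose \emph{both} children in the tree again carry uncountably many expansions. The key step is a pruning/König-type argument. Call a branching point $a(x)\in S_{q}$ \emph{splitting} if both $T_{q,0}(a(x))$ and $T_{q,1}(a(x))$ have uncountable $\Omega_{q}$. I would show that from any point $y$ with $\text{card }\Omega_{q}(y)$ uncountable, one can always reach (after finitely many transformations) a splitting branching point: otherwise, following at each branching point the unique child with uncountable expansions, one confines all but countably many expansions of $y$ to a single infinite horizontal path, and the remaining contributions (the countable side-branches) are a countable union of at-most-countable sets, giving $\text{card }\Omega_{q}(y)\le\aleph_{0}$, a contradiction.

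With that lemma in hand, the construction of the full binary subtree is routine recursion. Starting from $x$, locate a splitting branching point $y_{\emptyset}$; its two children $T_{q,0}(y_{\emptyset})$ and $T_{q,1}(y_{\emptyset})$ each have uncountable $\Omega_{q}$, so apply the lemma again to each to obtain splitting branching points $y_{0},y_{1}$ downstream, and continue, indexing the splitting points by finite binary words $w\in\{0,1\}^{<\mathbb{N}}$ so that $y_{w0}$ and $y_{w1}$ are descendants of the two children of $y_{w}$. Because distinct finite words yield branching sequences that diverge at some coordinate, the assignment $(\omega_{i})_{i=1}^{\infty}\mapsto$ (the infinite path through $y_{\emptyset},y_{\omega_{1}},y_{\omega_{1}\omega_{2}},\dots$) is an injection of $\{0,1\}^{\mathbb{N}}$ into the set of infinite paths of $\mathcal{T}(x)$, hence into $\Omega_{q}(x)$. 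This yields $\text{card }\Omega_{q}(x)\ge 2^{\aleph_{0}}$, and combined with the trivial upper bound we conclude $\text{card }\Sigma_{q}(x)=2^{\aleph_{0}}$.

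I expect the main obstacle to be the cardinality bookkeeping in the lemma, namely making rigorous the claim that a point whose tree never splits has only countably many expansions. The subtlety is that between consecutive branching points the sequence of transformations is forced and finite, so each non-splitting point contributes a path that eventually merges into one countable family of ``deviations'' from the main uncountable branch; one must verify that the total count of expansions obtained by deviating onto a countable side at one of countably many branching points is a countable union of countable (or finite) sets. This relies on the fact that each bifurcation point is reached after a finite, uniquely determined branching sequence, so the set of branching points along the main branch is itself countable. Once this counting is pinned down, the rest is the standard observation that a binary tree which splits unboundedly supports continuum-many paths; crucially this argument never invokes the continuum hypothesis, which is exactly why it settles the CH question for $\Sigma_{q}(x)$ unconditionally.
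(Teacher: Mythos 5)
Your proposal is correct and follows essentially the same route as the paper: your ``splitting'' branching points are exactly the bifurcation points of the paper's $2^{\aleph_{0}}$ branching tree $\mathcal{T}_{2^{\aleph_{0}}}(x)$, your key pruning lemma is precisely Lemma \ref{uncountable lemma} (including the decomposition of $\Omega_{q}(x)$ into one limit path plus a countable union of at-most-countable side families), and the recursive construction of a full binary subtree matches the paper's argument that $\mathcal{T}_{2^{\aleph_{0}}}(x)$ is the full binary tree.
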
 To prove this statement we need to construct another infinite tree in a similar way to how we constructed the branching tree corresponding to $x$ and the infinite branching tree corresponding to $x.$ We define the \textit{$2^{\aleph_{0}}$ branching tree corresponding to $x$} as follows. Suppose $x\in I_{q}$ satisfies $\Omega_{q}(x)$ is uncountable, if for each branching point of $x$ we have $\text{card }\Omega_{q}(T_{q,i}(a(x)))\leq \aleph_{0}$ for some $i\in\{0,1\},$ then the $2^{\aleph_{0}}$ branching tree corresponding to $x$ is an infinite horizontal line. If this is not the case then there exists a unique minimal branching sequence $a$ such that $\Omega_{q}(T_{q,0}(a(x)))$ and $\Omega_{q}(T_{q,1}(a(x)))$ are both uncountable, in this case we draw a finite horizontal line that then bifurcates with upper branch corresponding to $T_{q,0}(a(x))$ and lower branch corresponding to $T_{q,1}(a(x)).$ Applying these rules to the branches corresponding to $T_{q,0}(a(x)),$ $T_{q,1}(a(x))$ and all subsequent branches we obtain an infinite tree.  We refer to the infinite tree we obtain through this construction as the $2^{\aleph_{0}}$ branching tree corresponding to $x.$ Where appropriate we denote the $2^{\aleph_{0}}$ branching tree corresponding to $x$ by $\mathcal{T}_{2^{\aleph_{0}}}(x).$ 

\begin{remark}
\label{remark 1}
As was the case for $\mathcal{T}(x)$ and $\mathcal{T}_{\infty}(x)$ each infinite path in $\mathcal{T}_{2^{\aleph_{0}}}(x)$ can be identified with a unique element of $\Omega_{q}(x).$
\end{remark}
By Remark \ref{remark 1} to prove that if $\Sigma_{q}(x)$ is uncountable then $\text{card }\Sigma_{q}(x)=2^{\mathbb{\aleph}_{0}}$ it suffices to show that $\mathcal{T}_{2^{\aleph_{0}}}(x)$ is always the full binary tree. We will show that whenever $x\in I_{q}$ satisfies $\Sigma_{q}(x)$ is uncountable then there exists a branching sequence for $x$ such that $\Omega_{q}(T_{q,0}(a(x)))$ and $\Omega_{q}(T_{q,1}(a(x)))$ are both uncountable. Repeatedly applying this result to succesive branches in our construction will imply that every branch bifurcates and that $\mathcal{T}_{2^{\aleph_{0}}}(x)$ is the full binary tree.
\begin{lemma}
\label{uncountable lemma}
Let $q\in(1,2)$ and $x\in I_{q}.$ If $\Omega_{q}(x)$ is uncountable or equivalently $\Sigma_{q}(x)$ is uncountable, then there exists a branching point of $x$, $a(x),$ such that $\Omega_{q}(T_{q,0}(a(x)))$ and $\Omega_{q}(T_{q,1}(a(x)))$ are both uncountable.
\end{lemma}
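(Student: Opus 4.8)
The plan is to prove the contrapositive by a counting argument on the branching tree $\mathcal{T}(x)$, using nothing beyond the cardinal arithmetic $\aleph_{0}\cdot\aleph_{0}=\aleph_{0}$ together with the way $\Omega_{q}$ decomposes at a branching point. The starting observation is that if $\text{card }\Omega_{q}(y)>1$ and $a$ is the minimal branching sequence for $y$, then every element of $\Omega_{q}(y)$ agrees with $a$ on its first $|a|$ transformations and only afterwards chooses between $T_{q,0}(a(y))$ and $T_{q,1}(a(y))$; hence $\Omega_{q}(y)$ is in bijection with the disjoint union $\Omega_{q}(T_{q,0}(a(y)))\sqcup\Omega_{q}(T_{q,1}(a(y)))$, so that
\[
\text{card }\Omega_{q}(y)=\text{card }\Omega_{q}(T_{q,0}(a(y)))+\text{card }\Omega_{q}(T_{q,1}(a(y))).
\]
I would assume, for contradiction, that the conclusion fails, i.e.\ that at \emph{every} branching point $a(x)$ of $x$ at least one child satisfies $\text{card }\Omega_{q}(T_{q,i}(a(x)))\le\aleph_{0}$, and aim to deduce $\text{card }\Omega_{q}(x)\le\aleph_{0}$, contradicting uncountability. (This is exactly the input needed to force $\mathcal{T}_{2^{\aleph_{0}}}(x)$ to bifurcate.)

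Next I would build a spine down $\mathcal{T}(x)$. Set $x_{0}=x$, which is uncountable and so has a branching point. Having reached a point $x_{n}$ with $\text{card }\Omega_{q}(x_{n})>1$, I take its first branching point; by hypothesis one child is at most countable, which I call $L_{n}$, and I let $x_{n+1}$ be the other child $R_{n}$. The crucial bookkeeping point is that any branching point of $x_{n}$ is itself a branching point of $x$: since $x_{n}$ is reached from $x$ by a finite composition of $T_{q,0},T_{q,1}$, a branching point $a'(x_{n})$ of $x_{n}$ equals $a''(x)\in S_{q}$ for a suitable branching sequence $a''$ of $x$. Thus the standing hypothesis continues to apply at every $x_{n}$ and the construction is legitimate.

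Then I would count the infinite paths of $\mathcal{T}(x)$, equivalently the elements of $\Omega_{q}(x)$. Every such path either stays on the spine through all of $x_{1},x_{2},\dots$, or it first leaves the spine at some stage $n$ by entering $L_{n}$ rather than $R_{n}$. The spine determines a single infinite sequence of transformations, so there is at most one path of the first kind. The paths leaving at stage $n$ are in bijection with $\Omega_{q}(L_{n})$, a set of cardinality at most $\aleph_{0}$; summing over the countably many $n$ gives at most $\aleph_{0}\cdot\aleph_{0}=\aleph_{0}$ such paths. If instead the spine terminates, because some $x_{n}$ has $\text{card }\Omega_{q}(x_{n})\le\aleph_{0}$ (for instance $x_{n}\in U_{q}$), then $\Omega_{q}(x)$ is the union of $\Omega_{q}(x_{n})$ with the finitely many earlier deviation sets, again at most countable. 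In all cases $\text{card }\Omega_{q}(x)\le\aleph_{0}$, the desired contradiction, so some branching point must have both children uncountable.

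I do not expect a genuinely hard analytic step: the argument is purely combinatorial and valid for every $q\in(1,2)$, with no appeal to the algebraic structure exploited in Section~\ref{proof section}. The only points requiring care — and hence the main obstacle — are the clean decomposition at a branching point, the verification that the defining hypothesis propagates down the spine (the observation $a'(x_{n})=a''(x)\in S_{q}$), and the accounting that staying on the spine forever contributes a single path while every other path is absorbed into one of the countably many countable sets $\Omega_{q}(L_{n})$; making this bookkeeping precise is where the proof must be written carefully.
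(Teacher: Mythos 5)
Your proposal is correct and is essentially the paper's own argument: the paper builds the same spine as a nested sequence of minimal branching sequences $a^{k}$ with $\Omega_{q}(a^{k}(x))$ uncountable, collects the countable deviation sets $\Omega_{a^{k}}(x)$ (your $\Omega_{q}(L_{n})$), and notes that the componentwise limit $a^{\infty}$ accounts for the single path that never deviates, giving $\text{card }\Omega_{q}(x)\leq\aleph_{0}\cdot\aleph_{0}+1=\aleph_{0}$, a contradiction. Your bookkeeping (forced prefix up to the minimal branching sequence, propagation of the hypothesis via $a'(x_{n})=a''(x)\in S_{q}$) matches the paper's, so no further comparison is needed.
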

\begin{proof}
Suppose that for every branching point of $x$ we have $\textrm{card }\Omega_{q}(T_{q,i}(a(x)))\leq \aleph_{0}$ for some $i\in\{0,1\}$. We let $a^{1}=(a^{1}_{1},\ldots, a^{1}_{n_{1}})$ denote the unique minimal branching sequence of $x,$ by our assumption $\textrm{card }\Omega_{q}(T_{q,i_{1}}(a^{1}(x)))\leq \aleph_{0}$ for some $i_{1}\in\{0,1\},$ as $\Omega_{q}(x)$ is uncountable we must have $\Omega_{q}(T_{q,1-i_{1}}(a^{1}(x)))$ is uncountable. It is a consequence of $\Omega_{q}(T_{q,1-i_{1}}(a^{1}(x)))$ being  uncountable and our assumption, that there exists a unique minimal branching sequence $a^{2}=(a^{2}_{1},\ldots, a^{2}_{n_{2}})$ and $i_{2}\in\{0,1\}$ satisfying: $n_{2}>n_{1},$ $a^{1}_{j}=a^{2}_{j}$ for $1\leq j\leq n_{1},$ $\Omega_{q}(a^{2}(x))$ is uncountable, $\text{card }\Omega_{q}(T_{q,i_{2}}(a^{2}(x)))\leq \aleph_{0}$ and $\Omega_{q}(T_{q,1-i_{2}}(a^{2}(x)))$ is uncountable. Moreover, for $k\geq 2$ we define $a^{k}=(a^{k}_{1},\ldots, a^{k}_{n_{k}})$ and $i_{k}\in\{0,1\}$ inductively as follows, let $a^{k}$ denote the the unique minimal branching sequence of $x$ such that $n_{k}>n_{k-1},$ $a^{k-1}_{j}=a^{k}_{j}$ for $1\leq j\leq n_{k-1}$ and $\Omega_{q}(a^{k}(x))$ is uncountable, we let $i_{k}$ denote the unique element of $\{0,1\}$ such that $\text{card }\Omega_{q}(T_{q,i_{k}}(a^{k}(x)))\leq \aleph_{0}$ and $\Omega_{q}(T_{q,1-i_{k}}(a^{k}(x)))$ is uncountable.

To each $a^{k}$ we associate the set $$\Omega_{a^{k}}(x)=\Big\{ a\in \Omega_{q}(x)| a_{j}=a^{k}_{j} \text{ for } 1\leq j \leq n_{k} \text{ and } a_{n_{k}+1}= T_{q,i_{k}}\Big\}.$$Clearly $\text{card }\Omega_{a^{k}}(x)\leq \aleph_{0}$. Letting $a^{\infty}\in\{T_{q,0},T_{q,1}\}^{\mathbb{N}}$ denote the unique infinite sequence obtained as the componentwise limit of $(a^{k})_{k=1}^{\infty},$ it is an immediate consequence of our construction that $$\Omega_{q}(x)=\{a^{\infty}\}\cup(\bigcup_{k=1}^{\infty} \Omega_{a^{k}(x)})$$ and that $\text{card }\Omega_{q}(x)\leq \aleph_{0}$, a contradiction. Therefore there must exists a branching point of $x$ such that both $\Omega_{q}(T_{q,0}(a(x)))$ and $\Omega_{q}(T_{q,1}(a(x)))$ are uncountable.

\end{proof}
Theorem \ref{continuum theorem} follows from our earlier remarks.

\subsection{Properties of $\mathcal{B}_{\aleph_{0}}\cap ([\frac{1+\sqrt{5}}{2},q_{f})\setminus\{q_{2}\})$}
It is clear from the proof of Theorem \ref{Main thm} that the interval $J_{q}$ is an appropriate object of study, in particular, we are interested in its subset $(T^{-1}_{q,0}(U_{q})\cap J_{q})\cup (T^{-1}_{q,1}(U_{q})\cap J_{q}).$ For $k\geq 3$ we let $\alpha_{k}$ denote the unique $q\in(1,2)$ such that $$T_{q,0}\Big(\frac{1+q^{3}}{q^{4}-1}\Big)=((1)^{k}(10)^{\infty})_{q},$$ the appropriate root of $x^{k+4}=x^{k+3}+x^{k+2}+x^{k}-x^{2}-1.$ In particular $\alpha_{3}=q_{\aleph_{0}}$. It is a simple exercise to show that $\alpha_{k}\in[q_{\aleph_{0}},q_{f})$ for all $k\geq 3$ and $\alpha_{k}\nearrow q_{f}.$ Adapting the proof of Proposition \ref{second half prop} it can be shown that $\alpha_{k}\in \mathcal{B}_{\aleph_{0}},$ for all $k\geq 3.$ The significance of $\alpha_{k}$ follows from the fact that for $q\in[\alpha_{k},\alpha_{k+1})$ we have 
\begin{equation}
\label{alpha equation}
(T^{-1}_{q,0}(U_{q})\cap J_{q})\cup (T^{-1}_{q,1}(U_{q})\cap J_{q})=\Big\{(1(0)^{j}(01)^{\infty})_{q}, (0(1)^{j}(10)^{\infty})_{q}| \textrm{ for } 1\leq j\leq k\Big\}.
\end{equation} In what follows we let $$P_{q}=(T^{-1}_{q,0}(U_{q})\cap J_{q})\cup (T^{-1}_{q,1}(U_{q})\cap J_{q})$$ and $$U_{k,q}=\Big\{(1(0)^{j}(01)^{\infty})_{q}, (0(1)^{j}(10)^{\infty})_{q}| \textrm{ for } 1\leq j\leq k\Big\}.$$ The following result is implicit in our proof of Theorem \ref{Main thm} and therefore stated without proof.

\begin{prop}
\label{iff prop}
Let $q\in [q_{\aleph_{0}},q_{f})\setminus\{q_{2}\},$ then $q\in \mathcal{B}_{\aleph_{0}}$ if and only if $P_{q}$ contains a $q$ null infinite point.
\end{prop}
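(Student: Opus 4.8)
The plan is to prove Proposition \ref{iff prop} by establishing both directions, leaning heavily on the structural analysis already developed for the case $q\in(\frac{1+\sqrt{5}}{2},q_{\aleph_{0}})$. The key observation is that Lemma \ref{inner switch lemma} holds throughout $[\frac{1+\sqrt{5}}{2},q_{f}]$, so any $x$ with more than one $q$-expansion has a branching point lying in $J_{q}$; combined with Proposition \ref{Branching prop}, this means that the existence of a $q$ null infinite point anywhere forces the existence of one whose branching structure funnels through $P_{q}$. The reverse direction is essentially immediate: if $P_{q}$ contains a $q$ null infinite point $x$, then by the very definition of a $q$ null infinite point $\text{card }\Omega_{q}(x)=\aleph_{0}$, hence $x$ witnesses $q\in\mathcal{B}_{\aleph_{0}}$.

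For the forward direction I would argue by contraposition in a manner that mirrors the earlier proof. Suppose $q\in\mathcal{B}_{\aleph_{0}}$; by Proposition \ref{Branching prop} there is a $q$ null infinite point $y\in(0,\frac{1}{q-1})$. The goal is to exhibit a $q$ null infinite point lying \emph{in} $P_{q}$. The first step is to iterate the branching construction exactly as in Figure \ref{fig3}: let $a^{1},a^{2},\ldots$ be the successive minimal branching sequences and $b^{i}=a^{1}\cdots a^{i}$, so that each $b^{i}(y)\in S_{q}$. By Lemma \ref{inner switch lemma} some $b^{n}(y)\in J_{q}$, and since $y$ is a $q$ null infinite point, for this branching point one of $T_{q,0}(b^{n}(y)),T_{q,1}(b^{n}(y))$ lies in $U_{q}$ while the other is again a $q$ null infinite point. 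The decisive point is that $b^{n}(y)$ is itself a branching point of $y$ at which one child has a unique expansion, so $b^{n}(y)$ is a preimage of a point of $U_{q}$ and hence lies in $(T^{-1}_{q,0}(U_{q})\cap J_{q})\cup(T^{-1}_{q,1}(U_{q})\cap J_{q})=P_{q}$. The remaining observation is that $b^{n}(y)$ is a $q$ null infinite point in its own right: since the infinite branching tree of $y$ is an infinite horizontal line and $b^{n}(y)$ sits on that line, every branching point reachable from $b^{n}(y)$ still has a child in $U_{q}$, so $\mathcal{T}_{\infty}(b^{n}(y))$ is also an infinite horizontal line. Thus $b^{n}(y)\in P_{q}$ is the desired $q$ null infinite point.

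The step I expect to be the main obstacle is verifying that $b^{n}(y)$ is genuinely a $q$ null infinite point rather than merely an infinite point, i.e.\ that the $q$ null property is inherited along the horizontal spine of $\mathcal{T}_{\infty}(y)$. This requires checking that the dichotomy of Theorem \ref{Finite case thm} (away from $q_{2}$) indeed forces exactly one infinite and one singleton child at every branching point encountered, so that no hidden second infinite branch can emerge downstream from $b^{n}(y)$; this is precisely why the hypothesis excludes $q_{2}$, where $\mathcal{B}_{2}$-type behaviour could spoil the clean split. Once this inheritance is secured, the identification of $P_{q}$ with the explicit set $U_{k,q}$ via \eqref{alpha equation} makes the conclusion transparent, and the proposition follows; this is the content already established implicitly in the proof of Theorem \ref{Main thm}, which is why the statement is offered without a separate detailed argument.
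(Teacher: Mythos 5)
Your proposal is correct and follows essentially the same route as the paper, which states the proposition without proof precisely because the argument is the one implicit in the proof of Theorem \ref{Main thm}: the reverse direction is immediate from the definitions, and the forward direction funnels a $q$ null infinite point (supplied by Proposition \ref{Branching prop}) into $J_{q}$ via Lemma \ref{inner switch lemma}, uses the dichotomy of Theorem \ref{Finite case thm} away from $q_{2}$ to place one child of the resulting branching point in $U_{q}$ (hence the point lies in $P_{q}$), and observes that the null infinite property is inherited along the spine since every branching point of $b^{n}(y)$ is a branching point of $y$. You correctly flag the inheritance step and the role of excluding $q_{2}$ as the only points needing care.
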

Suppose $q\in [q_{\aleph_{0}},q_{f})\setminus\{q_{2}\},$ then $q\in [\alpha_{k},\alpha_{k+1})$ for some $k\geq 3,$ it follows from (\ref{alpha equation}) and Proposition \ref{iff prop} that to determine whether $q\in \mathcal{B}_{\aleph_{0}}$ we only have to verify whether $U_{k,q}$ contains a $q$ null infinite point. This statement makes determining whether $q\in \mathcal{B}_{\aleph_{0}}$ a reasonably straightforward computation as we only have finitely many cases to consider. Proposition \ref{iff prop} also yields the following result.
\begin{thm}
\label{discrete thm}
$\mathcal{B}_{\aleph_{0}}\cap ([\frac{1+\sqrt{5}}{2},q_{f})\setminus\{q_{2}\})$ is a discrete set.
\end{thm}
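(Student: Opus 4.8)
The plan is to combine Proposition \ref{iff prop} with the explicit description of $P_q$ in (\ref{alpha equation}) and a rigidity analysis of null infinite points. First I would reduce the problem: by the first half of Theorem \ref{Main thm} we already know $\mathcal{B}_{\aleph_{0}}\cap(\frac{1+\sqrt5}{2},q_{\aleph_{0}})=\emptyset$, so it suffices to prove discreteness on $[q_{\aleph_{0}},q_{f})\setminus\{q_{2}\}$. Since $\alpha_{k}\nearrow q_{f}$ with $\alpha_{3}=q_{\aleph_{0}}$, this interval is covered by the half-open intervals $[\alpha_{k},\alpha_{k+1})$ for $k\ge 3$, on each of which $P_{q}=U_{k,q}$ is a set of exactly $2k$ points depending real-analytically on $q$. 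By Proposition \ref{iff prop} it is then enough to show that for each $k$ the set of $q\in[\alpha_{k},\alpha_{k+1})$ for which $P_{q}$ contains a $q$ null infinite point is finite; since these finite sets can accumulate only at $q_{f}\notin[q_{\aleph_{0}},q_{f})$, their union is a discrete set.

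Next I would exploit the rigidity of a null infinite point $x\in P_{q}$. For $q\in(\frac{1+\sqrt5}{2},q_{f})\setminus\{q_{2}\}$ the remarks following Proposition \ref{Branching prop} force the branching tree of $x$ to have, at every branching point, exactly one branch in $U_{q}$ and one infinite branch; by Lemma \ref{inner switch lemma} the infinite branch returns to $J_{q}$, and being null infinite it must return inside $P_{q}$. Granting that the orbit is null infinite, the transition taking one such point of $P_{q}$ to the next is deterministic, so the itinerary of $x$ through the finite set $P_{q}$ is eventually periodic with period at most $|P_{q}|=2k$. Moreover, by the analysis behind Lemma \ref{sequence lemma}, each connecting word is a bounded concatenation of blocks of the form $T_{q,0}^{\,m}\circ T_{q,1}$; since the points of $P_{q}$ lie in $S_{q}$, hence uniformly away from the endpoints of $I_{q}$, and have the shape $(1(0)^{j}(01)^{\infty})_{q}$ or $(0(1)^{j}(10)^{\infty})_{q}$ with $1\le j\le k$, the exponents $m$ are bounded in terms of $k$. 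Hence on $[\alpha_{k},\alpha_{k+1})$ only finitely many combinatorial types of null infinite itinerary can occur.

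I would then convert each combinatorial type into an algebraic condition. A periodic itinerary corresponds to a fixed point of the associated return map, which is a composition of the affine maps $T_{q,0}(x)=qx$ and $T_{q,1}(x)=qx-1$, hence of the form $x\mapsto q^{L}x-c(q)$ with $c(q)\in\mathbb{Z}[q]$; the requirement that the tracked point of $P_{q}$ be fixed, together with the requirement from Lemma \ref{Unique expansions lemma} that each terminating branch equal a prescribed value $(1^{m}(10)^{\infty})_{q}$ or $(0^{m}(10)^{\infty})_{q}$, yields that $q$ is a root of a polynomial $F$ drawn from a finite list determined by $k$. The crucial point is that each such $F$ is not identically zero: the exact membership conditions equate two analytic functions of $q$ whose difference is strictly monotonic on the interval (equivalently, they are distinct rational functions of $q$, as one checks from the explicit tails), so each $F$ has only finitely many zeros. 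Consequently the set of $q\in[\alpha_{k},\alpha_{k+1})$ admitting a null infinite point is contained in a finite union of finite zero-sets, hence is finite, which gives the theorem.

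I expect the main obstacle to be the uniform control over all combinatorial types, namely ruling out that arbitrarily long itineraries contribute new values of $q$ accumulating at a point of the set. This is exactly what the two boundedness facts are designed to exclude: the period is bounded by $2k$, and the connecting-word exponents are bounded because $P_{q}$ stays away from the endpoints of $I_{q}$. Verifying this exponent bound uniformly on each $[\alpha_{k},\alpha_{k+1})$, together with the nontriviality of each $F$, is where the real work lies; once both are in hand, local finiteness and hence discreteness follow immediately.
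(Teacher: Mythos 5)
Your overall strategy --- reduce via Proposition \ref{iff prop} and the identity $P_{q}=U_{k,q}$ on $[\alpha_{k},\alpha_{k+1})$, then turn the existence of a $q$ null infinite point in $P_{q}$ into polynomial conditions on $q$ --- points in the right direction, and your reduction of discreteness to local finiteness on the intervals $[\alpha_{k},\alpha_{k+1})$ is sound. But the step you yourself flag as ``where the real work lies'' is a genuine gap, and in the form you state it I do not see how to close it. You claim only finitely many combinatorial types of null infinite itinerary occur on $[\alpha_{k},\alpha_{k+1})$, because the period through $P_{q}$ is at most $2k$ and the connecting-word exponents are bounded in terms of $k$ since the points of $P_{q}$ lie in $S_{q}$. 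The difficulty is that the itinerary of a null infinite point does not pass only through $P_{q}$: between consecutive visits to $J_{q}$ it passes through branching points in $S_{q}\setminus J_{q}$, and at each of these the dying branch must land on some element $(1^{m}(10)^{\infty})_{q}$ or $(0^{m}(10)^{\infty})_{q}$ of $U_{q}$ whose depth $m$ is \emph{not} bounded by $k$ --- the bound $m\le k$ coming from (\ref{alpha equation}) is special to $J_{q}$. A complete combinatorial type must record these depths and the lengths of \emph{all} connecting words, so a priori there are countably many types, each contributing a polynomial; a countable union of finite zero sets need not be finite, nor even discrete. To rescue the argument you would need a uniform bound on the return time to $J_{q}$ and on the intermediate depths $m$, a quantitative claim you have not established. (The nontriviality of each polynomial $F$ is also only asserted, though that part is fixable: the surviving itinerary deviates from the canonical expansion of the tracked point at its very first step, so the two sides cannot coincide as rational functions of $q$.)

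The paper sidesteps this entirely by arguing locally instead of proving finiteness on each $[\alpha_{k},\alpha_{k+1})$. Fix $q^{*}\in\mathcal{B}_{\aleph_{0}}\cap([q_{\aleph_{0}},q_{f})\setminus\{q_{2}\})$ and split the finitely many points of $P_{q^{*}}=U_{k,q^{*}}$ into those that are $q^{*}$ null infinite points and those that are not. For each point of the second kind there is a single branching point at which both branches leave $U_{q}$; this is an open condition and persists on a neighbourhood of $q^{*}$. For each point of the first kind, one branching point supplies a single polynomial identity $f(q^{*})=0$ (the dying branch equals a prescribed element of $U_{q^{*}}$) together with open conditions; since $f\not\equiv 0$, the identity fails on a punctured neighbourhood of $q^{*}$ and the point ceases to be null infinite there. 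Intersecting these finitely many neighbourhoods isolates $q^{*}$. The point is that only the \emph{first} relevant branching point of each candidate is ever examined, so no control over the long-time combinatorics of the itinerary --- exactly the uniform bound your global approach requires --- is needed.
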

\begin{proof}
As $\mathcal{B}_{\aleph_{0}}\cap[\frac{1+\sqrt{5}}{2},q_{\aleph_{0}})=\{\frac{1+\sqrt{5}}{2}\}$ it suffices to show that $\mathcal{B}_{\aleph_{0}}\cap ([q_{\aleph_{0}},q_{f})\setminus\{q_{2}\})$ is a discrete set. For each $q^{*}\in \mathcal{B}_{\aleph_{0}}\cap ([q_{\aleph_{0}},q_{f})\setminus\{q_{2}\}),$ we shall construct an open interval $I_{q^{*}}$ satisfying: $q^{*}\in I_{q^{*}}$ and $(I_{q^{*}}\setminus\{q^{*}\})\cap \mathcal{B}_{\aleph_{0}}=\emptyset,$  this will imply $\mathcal{B}_{\aleph_{0}}\cap ([q_{\aleph_{0}},q_{f})\setminus\{q_{2}\})$ is a  discrete set.
  
Suppose $q^{*}\in \mathcal{B}_{\aleph_{0}}\cap ([q_{\aleph_{0}},q_{f})\setminus\{q_{2}\}),$ then $q^{*}\in[\alpha_{k},\alpha_{k+1})$ for some $k\geq 3$ and $P_{q^{*}}=U_{k,q^{*}}.$ By a continuity argument there exists an open interval $I_{1}$ satisfying: $q^{*}\in I_{1}$ and $P_{q}\subseteq U_{k,q},$ for all $q\in I_{1}.$ We let $$\Sigma_{null}=\Big\{ (\epsilon_{i})_{i=1}^{\infty}\in \{1(0)^{j}(01)^{\infty}, 0(1)^{j}(10)^{\infty}| 1\leq j\leq k\}\Big| ((\epsilon_{i})_{i=1}^{\infty})_{q^{*}} \textrm{ is a $q^{*}$ null infinite point }\Big\},$$ and $$\Sigma_{bif}=\Big\{1(0)^{j}(01)^{\infty}, 0(1)^{j}(10)^{\infty}| 1\leq j\leq k\Big\}\setminus \Sigma_{null}.$$ For ease of exposition we let $\Sigma_{null}=\{(\epsilon_{i}^{m})_{i=1}^{\infty}\}_{m=1}^{M}$ and $\Sigma_{bif}=\{(\epsilon_{i}^{n})_{i=1}^{\infty}\}_{n=1}^{N}.$ We will show that for each $(\epsilon_{i}^{m})_{i=1}^{\infty}\in \Sigma_{null}$ there exists a finite sequence of transformations $a$ and an open interval $I_{m}$ such that, $q^{*}\in I_{m}$ and for each $q\in I_{m}\setminus\{q^{*}\}$ we have $T_{q,i}(a(((\epsilon_{i}^{m})_{i=1}^{\infty})_{q}))\notin U_{q},$ for $i\in\{0,1\}$. Similarly, we will show that for each $(\epsilon_{i}^{n})_{i=1}^{\infty}\in \Sigma_{bif}$ there exists a finite sequence of transformations $a$ and an open interval $I_{n}$ such that, $q^{*}\in I_{n}$ and for all $q\in I_{n}$ we have $T_{q,i}(a(((\epsilon_{i}^{n})_{i=1}^{\infty})_{q}))\notin U_{q},$ for $i\in\{0,1\}.$ Taking $$I_{q^{*}}= I_{1}\cap(\bigcap_{m=1}^{M} I_{m})\cap(\bigcap_{n=1}^{N}I_{n}),$$ it will follow from our construction that if $q\in I_{q^{*}}\setminus\{q^{*}\}$ then every element of $P_{q}$ cannot be a $q$ null infinite point, which by Proposition \ref{iff prop} implies $(I_{q^{*}}\setminus\{q^{*}\})\cap \mathcal{B}_{\aleph_{0}}=\emptyset$ and $\mathcal{B}_{\aleph_{0}}\cap ([q_{\aleph_{0}},q_{f})\setminus\{q_{2}\})$ is a discrete set.

To begin with let us consider $(\epsilon_{i}^{m})_{i=1}^{\infty}\in \Sigma_{null},$ by an application of Lemma \ref{inner switch lemma} there exists a finite sequence of transformations $a$ such that $a(((\epsilon_{i}^{m})_{i=1}^{\infty})_{q^{*}})\in P_{q^{*}},$  $T_{q^{*},i}(a(((\epsilon_{i}^{m})_{i=1}^{\infty})_{q^{*}}))\notin U_{q^{*}}$ and $T_{q^{*},1-i}(a(((\epsilon_{i}^{m})_{i=1}^{\infty})_{q^{*}}))=((\delta_{i})_{i=1}^{\infty})_{q^{*}}\in U_{q^{*}},$ for some $i\in\{0,1\}.$ By continuity we can assert that there exists an open interval $I'_{m}$ satisfying: $q^{*}\in I'_{m},$ $a(((\epsilon_{i}^{m})_{i=1}^{\infty})_{q}\in S_{q}$ and $T_{q,i}(a(((\epsilon_{i}^{m})_{i=1}^{\infty})_{q}))\notin U_{q}$ for all $q\in I'_{m}$. Since $q^{*}\in [\alpha_{k},\alpha_{k+1})$ we have $$(\delta_{i})_{i=1}^{\infty}\in \{(0)^{j}(01)^{\infty}, (1)^{j}(10)^{\infty}| 1\leq j\leq k\},$$ from which it follows that satisfying $T_{q,1-i}(a(((\epsilon_{i}^{m})_{i=1}^{\infty})_{q}))=((\delta_{i})_{i=1}^{\infty})_{q}$ is equivalent to satisfying $f(q)=0$ for some nontrivial polynomial $f(q)\in \mathbb{Z}[q].$ Clearly $f(q^{*})=0$, however, since $f(q)=0$ has a finite number of solutions there exists an open interval $I_{m}''$ satisfying: $q^{*}\in I_{m}''$, $a(((\epsilon_{i}^{m})_{i=1}^{\infty})_{q})\in S_{q}$ and $f(q)\neq 0$ for all $q\in I_{m}''\setminus\{q^{*}\}.$ Moreover, by continuity we may assume that $I''_{m}$ is sufficiently small such that $T_{q,1-i}(a(((\epsilon_{i}^{m})_{i=1}^{\infty})_{q}))\notin U_{q}\setminus\{((\delta_{i})_{i=1}^{\infty})_{q}\},$ for all $q\in I''_{m}$. Taking $I_{m}=I'_{m}\cap I''_{m},$ we may conclude that for all $q\in I_{m}\setminus\{q^{*}\}$ we have $T_{q,i}(a(((\epsilon_{i}^{m})_{i=1}^{\infty})_{q}))\notin U_{q},$ for $i\in\{0,1\}.$

It remains to consider $(\epsilon_{i}^{n})_{i=1}^{\infty}\in \Sigma_{bif},$ as $((\epsilon_{i}^{n})_{i=1}^{\infty})_{q^{*}}$ is not a $q^{*}$ null infinite point there exists a finite sequence of transformations $a$ such that $a(((\epsilon_{i}^{n})_{i=1}^{\infty})_{q^{*}})\in S_{q^{*}}$ and $T_{q^{*},i}(a(((\epsilon_{i}^{n})_{i=1}^{\infty})_{q^{*}}))\notin U_{q^{*}},$ for $i\in\{0,1\}.$ By continuity it follows that there exists an open interval $I_{n}$ such that, $q^{*}\in I_{n},$  $a(((\epsilon_{i}^{n})_{i=1}^{\infty})_{q})\in S_{q}$ and  $T_{q,i}(a(((\epsilon_{i}^{n})_{i=1}^{\infty})_{q}))\notin U_{q},$ for $i\in\{0,1\},$ for all $q\in I_{n}.$
\end{proof}
The discreteness of $\mathcal{B}_{\aleph_{0}}\cap ([\frac{1+\sqrt{5}}{2},q_{f})\setminus\{q_{2}\})$ leads to some interesting questions that we state in the next section.

\section{Open questions}
\label{Open problems}
To conclude we shall pose some open questions. 
\begin{itemize}
	\item In \cite{Sid1} Sidorov constructs a sequence $(q_{k})_{k=1}^{\infty}$ such that, $q_{k}\in \mathcal{B}_{\aleph_{0}}$ for all $k\geq 1$ and $q_{k}\searrow q_{2}.$ As stated at the start of Section \ref{Final discussion} $\alpha_{k}\nearrow q_{f},$ as such the following question seem natural. Suppose $q\in \mathcal{B}_{m}$ for some $m\geq 2,$ is $q$ a limit point of $\mathcal{B}_{\aleph_{0}}$? Moreover is the converse true, that is, if $q$ is a limit point of $\mathcal{B}_{\aleph_{0}}$ does that imply $q\in \mathcal{B}_{m}$ for some $m\geq 2?$ The discreteness of $\mathcal{B}_{\aleph_{0}}\cap([\frac{1+\sqrt{5}}{2},q_{f})\setminus\{q_{2}\})$ guaranteed by Theorem \ref{discrete thm} might seem to suggest so.
	\item Is $\mathcal{B}_{\aleph_{0}}$ closed?
	\item Is $q_{2}\in \mathcal{B}_{\aleph_{0}}$? If $q_{2}\in \mathcal{B}_{\aleph_{0}}$ then it would be a consequence of our above remarks, Theorem \ref{discrete thm} and \cite[Proposition~2.1]{BakerSid} that $\mathcal{B}_{\aleph_{0}}\cap [\frac{1+\sqrt{5}}{2},q_{f}]$ is a closed set.
	\item Given $q\in \mathcal{B}_{\aleph_{0}},$ what is the topology of the set of $q$ null infinite points?
\end{itemize}
\medskip\noindent {\bf Acknowledgments.} The author is grateful to Nikita Sidorov for his useful comments and insight.

\end{document}